%
\documentclass{amsart}
\usepackage{amssymb}
\usepackage{amscd}
\usepackage{amsmath}
\usepackage{epsfig}
\usepackage{graphicx}
\usepackage{color}

\usepackage{time}

\usepackage[all]{xy}

\usepackage{pgf,tikz}
\usepackage{caption}
\usepackage{shuffle}

\usetikzlibrary{cd} 




\usepackage{ulem}
\newcommand{\Add}[1]{\textcolor{black}{#1}}

\title[$p$-adic hypergeometric function]
{$p$-adic hypergeometric function related with $p$-adic multiple polylogarithms}
\author{Hidekazu Furusho}
\address{Graduate School of Mathematics, Nagoya University, Chikusa-ku, Furo-cho, Nagoya, 464-8602,  Japan}
\email{furusho@math.nagoya-u.ac.jp}


\subjclass[2020]{Primary~11S80, Secondary~12H25, 33C05}

\date{September 23, 2025}
\newtheorem{thm}{Theorem}
\newtheorem{lem}[thm]{Lemma}

\newtheorem{prop}[thm]{Proposition}

\theoremstyle{remark}

\theoremstyle{definition}
\newtheorem{defn}[thm]{Definition}
\newtheorem{rem}[thm]{Remark}

\newtheorem{eg}[thm]{Example}       

\numberwithin{equation}{section}
\numberwithin{figure}{section}





\newcommand{\GL}{\mathrm{GL}}

\newcommand{\KZ}{\mathrm{KZ}}

\newcommand{\col}{\mathrm{Col}}

\newcommand{\Q}{\mathbb{Q}}
\newcommand{\C}{\mathbb{C}}
\newcommand{\R}{\mathbb{R}}
\newcommand{\Z}{\mathbb{Z}}
\newcommand{\N}{\mathbb{N}}

\newcommand{\F}{\mathbb{F}}

\newcommand{\V}{\mathcal{V}}
\newcommand{\X}{\mathcal{X}}
\newcommand{\PP}{\mathcal{R}}

\newcommand{\aaa}{\mathsf a}
\newcommand{\bbb}{\mathsf b}
\newcommand{\ccc}{\mathsf c}
\newcommand{\uuu}{\mathsf u}
\newcommand{\vvv}{\mathsf v}

\newcommand{\e}{\mathsf e}

\newcommand{\zzz}{\mathsf z}

\newcommand{\LL}{\mathsf{L}}

\newcommand{\wt}{\mathrm{wt}}

\newcommand{\Li}{\mathrm{Li}}

\newcommand{\cok}{\mathrm{cok}}

\newcommand{\red}{\mathrm{red}}
\newcommand{\loc}{\mathrm{loc}}

\newcommand{\Mat}{\mathrm{Mat}}

\newcommand*{\GGamma}[3]{\Gamma_{#1}{\left({{#2}\atop{#3}}\right)}}

\newcommand*{\uHG}[4]{{}_{2}F_{1}\!{}\strut^{#1}{\left({{#2}\atop{#3}}\middle|{#4}\right)}}

\catcode`,\active

\catcode`\,12

\begin{document}
\bibliographystyle{amsalpha+}
\maketitle

\begin{abstract}
This paper introduces a $p$-adic analogue of Gauss's hypergeometric function,
constructed via a method that is distinct from
distinct from Dwork's approach. 
The idea of our construction
is motivated by the Ohno-Zagier formula,
which is elucidated through
the relationship between the hypergeometric differential equation and the Knizhnik-Zamolodchikov (KZ) equation.
We develop a rigorous framework for the residue-wise analytic prolongation of our 
$p$-adic hypergeometric function
by exploring its relationship with
$p$-adic multiple polylogarithms.
Through a detailed analysis of its local behavior near the point $1$,
we show a $p$-adic version of Gauss hypergeometric theorem
for the function.
\end{abstract}

\tableofcontents
\setcounter{section}{-1}
\section{Introduction}\label{introduction}
Let $p$ be a prime number. This paper investigates a $p$-adic analogue of
Gauss's hypergeometric function.
By exploiting  techniques derived from the $p$-adic Knizhnik-Zamolodchikov (KZ) equation,
we introduce a formal version 
of $p$-adic hypergeometric function 
(in Definition \ref{defn: formal version of pHG})
denoted as
\begin{equation}\label{eq: formal pHG}
\uHG{p,\varpi}{\aaa,\bbb}{\ccc}{-}:\Q_p\setminus\{1\}\to \Q_p[[\aaa,\bbb,\ccc-1]].
\end{equation}
where $\varpi$ is a branch of $p$-adic logarithm (see \S \ref{subsec: review on Coleman integrations}).
This function satisfies the
hypergeometric differential equation
\begin{equation}\label{eq:formal HG diff eq}
z(1-z)\frac{d^2w}{dz^2}+\{\ccc-(\aaa+\bbb+1)z\}\frac{dw}{dz}-\aaa\bbb w=0.
\end{equation}
with $w=\uHG{p,\varpi}{\aaa,\bbb}{\ccc}{z}$.
This function, formally defined in Definition \ref{defn: formal version of pHG},
takes values in formal power series. 
And we provide a rigorous framework for interpreting this object under specialization of the parameters
$(\aaa,\bbb,\ccc)\mapsto (\alpha,\beta,\gamma)\in\Q_p^3$.
Our main result is summarized  as follows: 

\begin{thm}\label{thm:main_result}
Assume that  $\alpha,\beta,\gamma-1\in p\Z_{(p)}$ 
with 
$\Z_{(p)}:=\Z_p\cap\Q$,
$\gamma\not\in \Z_{\leq 0}$,
$\alpha+\beta-\gamma, 
\alpha-\beta\in \Z_{(p)}\setminus\N_{\pm}$ and
let $\varpi$ be a branch of the $p$-adic logarithm 
chosen such that $\varpi\in\Z_p$.
Then the following assertions hold within a rigorously constructed framework of specialization from the formal parameters
$(\aaa,\bbb,\ccc)$ to the values $ (\alpha,\beta,\gamma)$:

(1). 
There exists a well-defined  map
$$
\uHG{p,\varpi}{\alpha,\beta}{\gamma}{-}:\Q_p\setminus\{1\}\to \Q_p
$$
which satisfies the hypergeometric differential equation
\eqref{eq:formal HG diff eq} with
$(\aaa,\bbb,\ccc)=(\alpha,\beta,\gamma)$.

(2). 
The restriction of this map to the open unit disk centered at $z=0$
coincides with the convergent series
$\sum_{n=0}^\infty\frac{(\alpha)_n(\beta)_n}{(\gamma)_n n!}z^n$.
Here $(s)_n$ denotes the Pochhammer symbol, defined as 
$(s)_0=1$, $(s)_n=s(s+1)\cdots (s+n-1)$ for $n > 0$.

(3).
The restriction of this map to $\Z_p\setminus \{1+p\Z_p\}$
is independent of the choice of the branch
$\varpi$ of the $p$-adic logarithm.

(4). The $p$-adic Gauss hypergeometric theorem holds at $z=1$, that is,
$$
\lim_{\substack{ z\to 1}}\uHG{p,\varpi}{\alpha,\beta}{\gamma}{z}
=\prod_{k=1}^\infty\frac{\Gamma_p(1+p^k\mu+p^k\alpha)\cdot\Gamma_p(1+p^k\mu+p^k\beta)}
{\Gamma_p(1+p^k{\mu})\cdot\Gamma_p(1+p^k{\nu})}
$$
where $\mu=1-\gamma$ and $\nu=\alpha+\beta+1-\gamma$ and
$\Gamma_p$ denotes Morita's $p$-adic gamma function
(cf. \eqref{eq: Morita gamma}).
\end{thm}

While the  case $(\alpha,\beta,\gamma)=(\frac{1}{2},\frac{1}{2},1)$,
corresponding to elliptic integrals,
falls outside the scope of Theorem \ref{thm:main_result}, we establish the following result for a broader class of parameters
that includes this case.

\begin{thm} \label{thm:extended_convergence}
Assume that $\alpha,\beta \in \Z_p$
and $\gamma\in \Z_{(p)}\setminus \Z_{\leqslant 0}$.
Then there is a rigorous framework for the specialization
$(\aaa,\bbb,\ccc)\mapsto (\alpha,\beta,\gamma)$,
under which the resulting map
$$
\uHG{p}{\alpha,\beta}{\gamma}{-}:\Z_p\setminus \{1+p\Z_p\} \to \Q_p.
$$
is well-defined and independent of the choice of
branch parameter $\varpi$ of the $p$-adic logarithm.
In addition under the assumption of Theorem \ref{thm:main_result},
our map coincides with map in Theorem \ref{thm:main_result}.
\end{thm}

This theorem significantly expands the domain of convergence for our $p$-adic hypergeometric function, encompassing important special cases such as the Legendre function.
It demonstrates the robustness of our construction and its applicability to a wider range of parameters than initially considered.

Our results are, in fact, more technically sophisticated and proven in a more general setting. 
The conditions for $\alpha, \beta, \gamma$ are relaxed, 
$\Q_p$ can be replaced with a larger field.
The core of our proof strategy lies in examining the convergence residue-wise by exploiting a $p$-adic analogue of Ohno-Zagier formula \eqref{eq: expansion of HG}
which connects the function with $p$-adic multiple polylogarithms
and by establishing connection formulas of the function \eqref{eq: formal pHG}
---between $0$ and $\infty$ in Theorem \ref{thm: formal connection formula between 0 and infinity}
 and between $0$ and $1$ in Theorem \ref{thm: connection between 0 and 1}.

The proof structure is as follows:
\begin{enumerate}
\renewcommand{\labelenumi}{(\roman{enumi})}
\item
A rigorous framework for specialization, convergence, and the coincidence with the classical power series on the residue disk
$]0[$ is established in \S\ref{subsec: local behavior at 0}.
\item
Rigorous specialization on the residue disk $]\infty[$ is establoshed in \S \ref{subsec: local behavior at infty}.
\item
Rigorous specialization on the residue disk $]1[$, together with the proof of 
the $p$-adic Gauss hypergeometric theorem are presented in \S \ref{subsec: local behavior at 1}.
\item
Convergence on the remaining residue disks is addressed in \S \ref{subsec: prolongation on the rest disks}.
\end{enumerate}


Our $p$-adic hypergeometric function  can be regarded as a $p$-adic counterpart 
to the $\ell$-adic hypergeometric function introduced in our previous paper \cite{F21}.
It is worth noting that another $p$-adic analogue of the hypergeometric function has been extensively studied by Dwork \cite{Dw}, 
whose region of definition is smaller than that of ours.
However, the region of definition for Dwork's function is more restricted than ours. The precise relationship between our function and Dwork's remains an open question for further investigation.

{\it Acknowledgments.}
The author has been supported by grants JSPS KAKENHI  JP18H01110, JP20H00115,
JP21H00969 and JP21H04430.
He is grateful to 
Shinichi Kobayashi
and
Seidai Yasuda
for valuable discussions.

\section{Complex case}\label{sec: complex case}
This section recalls the relationship between KZ equation and Euler's hypergeometric differential equation,  and expounds
upon the Ohno-Zagier formula,  which connect
the hypergeometric function with multiple polylogarithms.

\subsection{Multiple polylogarithms}\label{subsec: MPL}
The (formal) {\it KZ (Knizhnik-Zamolodchikov) equation} 
is a differential equation over $\mathcal X(\C)$,
where $\mathcal{X} = \mathbb{P}^1 \setminus \{0,1,\infty\}$.
It takes the form:
\begin{equation}\label{eq:KZ-equation}
\frac{d}{dz}G(z)=\{\frac{\e_0}{z}+\frac{\e_1}{z-1}\}\cdot G(z)
\end{equation}

Here
$ G(z)=G^\C(z)\in {\mathcal O}_{\tilde{\mathcal X}}\langle\langle \e_0,\e_1\rangle\rangle$
is analytic in complex variables, with each coefficient being analytic
over the universal unramified covering ${\tilde{\mathcal X}}$,
and takes values in the noncommutative formal power series ring
$\C\langle\langle \e_0,\e_1\rangle\rangle$.

The KZ equation exhibits regular Fuchsian singularities at
$z=0$, $1$ and $\infty$

In \cite[\S 3]{Dr89}, Drinfeld  considers the fundamental solution 
$G^\C_{\vec{01}}(\e_0,\e_1)(z)$
which is uniquely determined by
the asymptotic property
$G^\C_{\vec{01}}(\e_0,\e_1)(z)z^{-\e_0}\to 1$ when $z\in\R_+$ approaches $0$.
Here $z^{-\e_0}$ is defined as the formal series $\sum_{i=0}^\infty\frac{(-\log z)^i}{i!}\e_0^i$.

He further investigates 5 other solutions of the  KZ equation
with certain specific asymptotic properties,
which are described as
\begin{align}\label{eqB}
G^\C_{\vec{10}}(\e_0,\e_1)(z)
&=G^\C_{\vec{01}}(\e_1,\e_0)(1-z)
, \\ \notag
G^\C_{\vec{1\infty}}(\e_0,\e_1)(z)
&=G^\C_{\vec{01}}(\e_1,\e_\infty)(1-\frac{1}{z})
, \\ \notag
G^\C_{\vec{\infty 1}}(\e_0,\e_1)(z)
&=G^\C_{\vec{01}}(\e_\infty,\e_1)(\frac{1}{z})
, \\ \notag
G^\C_{\vec{\infty 0}}(\e_0,\e_1)(z)
&=G^\C_{\vec{01}}(\e_\infty,\e_0)(\frac{1}{1-z})
, \\ \notag
G^\C_{\vec{0\infty}}(\e_0,\e_1)(z)
&=G^\C_{\vec{01}}(\e_0,\e_\infty)(\frac{z}{z-1})
\end{align}
with $\e_\infty=-\e_0-\e_1$.
The quotient
$$
\varPhi^\C_\KZ(\e_0,\e_1):=
G^\C_{\vec{10}}(\e_0,\e_1)(z)^{-1}\cdot G^\C_{\vec{01}}(\e_0,\e_1)(z)
\in \C\langle\langle \e_0,\e_1\rangle\rangle
$$
is independent of $z$ and is called
the {\it KZ associator}.
We also note that
$\exp\{\pi\sqrt{-1}\e_0\}=G^\C_{\vec{01}}(\e_0,\e_1)(z)^{-1}\cdot G^\C_{\vec{0\infty}}(\e_0,\e_1)(z)$

Let $\mathbb A=
\C\langle \e_0,\e_1\rangle$ 
be the polynomial part of $\C\langle\langle \e_0,\e_1\rangle\rangle$.
We call an element of $\mathbb A$ which is a monomial 
with coefficient $1$
by a {\it word}.
But exceptionally we shall not call $1$ a word.
For each word $W$, its {\it weight} $\wt (W)$
(resp. its {\it depth}  $\mathrm{dp}(W)$)
is defined to be 
the sum of exponents of $\e_0$ and $\e_1$ (resp. of $\e_1$) in $W$.
Let $\mathbb M'=\mathbb A\cdot \e_1=\{ F\cdot \e_1 \bigm| F\in\mathbb A\}$
be the $\C$-linear subspace of 
$\mathbb A$.
Note that there is a natural $\C$-linear surjection from
$\mathbb A$  to
$\mathbb A\bigm/\mathbb A\cdot \e_0$.
By identifying the latter space with 
$\C\cdot 1+\mathbb M' (=\C\cdot 1+\mathbb A\cdot \e_1)$,
we obtain the $\Q$-linear map
$f':\mathbb A\twoheadrightarrow\mathbb A \bigm/\mathbb A\cdot \e_0\overset{\sim}{\to}\C\cdot 1+\mathbb M'\hookrightarrow\mathbb A$.
For each word $W=\e_1^{q_0}\e_0^{p_1}\e_1^{q_1}\e_0^{p_2}\e_1^{q_2}\dotsm \e_0^{p_k}\e_1^{q_k}$
($k\geqslant 0$, $q_0\geqslant 0$, $p_i,q_i\geqslant 1$ for $i\geqslant 1$)
in $\mathbb M'$, we define
$$
\Li^\C_W(z):=\Li^\C_{
\underbrace{1,\ldots 1}_{q_k-1},
p_k+1,
\underbrace{1,\ldots 1}_{q_{k-1}-1},
p_{k-1}+1,\ldots\ldots ,1,p_1+1
\underbrace{1,\ldots 1}_{q_0}
}(z).
$$
Here $\Li^\C_{\mathbf k}(z)\in {\mathcal O}_{\tilde{\mathcal X}}$ for a tuple ${\mathbf k}=(k_1,\dots,k_n)\in\N^n$ ($n\in\N$)
is the {\it multiple polylogarithm}. 
This is a complex-valued function whose restriction to the open unit disk centered at the origin is defined by the formal power series
\begin{equation}\label{eq:MPL}
\Li^\Q_{\mathbf k}(z):=\sum_{0<m_1<\cdots<m_n}\frac{z^{m_n}}{m_1^{k_1}\cdots m_n^{k_n}}
\in\Q[[z]]
\end{equation} 
and satisfies the following recursive differential equation:
\begin{align}\label{eq: diff eq MPL}
&\frac{d}{dz}\Li^\C_{k_1,\dots,k_n}(z)
=\begin{cases}
\frac{1}{z}\Li^\C_{k_1,\dots,k_n-1}(z) & (k_n>1), \\
\frac{1}{1-z}\Li^\C_{k_1,\dots,k_{n-1}}(z) & (k_n=1),
\end{cases}
&\frac{d}{dz}\Li_1^\C(z)=\frac{1}{1-z} .
\end{align}
which facilitates its analytic continuation beyond the unit disk.

By extending linearly, we get the $\C$-linear map
$\Li^\C(z):\mathbb M'\to {\mathcal O}_{\tilde{\mathcal X}}$
which sends each word $W$ in $\mathbb M'$ to $\Li^\C_W(z)$.

It is shown that coefficients of
$G^\C_{\vec{01}}(\e_0,\e_1)(z)$ are given by polynomial combinations of
multiple polylogarithms
and the logarithm $\log(z)$.

\begin{prop}[\cite{F03,F04}]\label{prop:explicit formulae}
Let $G^\C_{\vec{01}}(\e_0,\e_1)(z)=1+\sum\limits_{W:\mathrm{words}} J(W)(z) \ W $.
Then each coefficient $ J(W)(z)$ is expressed as follows:
\begin{enumerate}
\renewcommand{\labelenumi}{(\alph{enumi})}
\item  When $W$ is in $M'$, \ \ \ $J(W)(z)=(-1)^{dp (W)}\Li^\C_W(z)$.
\item When $W$ is written as $V\e_0^r$ ($r\geqslant 0,V\in M'$),
\[
J(W)(z)=\underset{0\leqslant s,0\leqslant t}{\sum_{s+t=r}}
(-1)^{dp (W)+s}\Li^\C_{f'(V\shuffle \e_0^s)}(z)
\frac{\{\log(z)\}^t}{t!}\quad .
\]
\item When $W$ is written as 
$\e_0^r$ ($r\geqslant 0$), \ \ \ $J(W)(z)=\frac{\{\log(z)\}^r}{r!}$.
\end{enumerate}
\end{prop}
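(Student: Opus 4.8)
The plan is to realize $G_{\vec{01}}(T_0,T_1)(z)$ as the generating series of (regularized) iterated integrals of the two Fuchsian $1$-forms $\omega_0=\frac{dz}{z}$ and $\omega_1=\frac{dz}{z-1}$ attached to the KZ equation, and then to read off the coefficients $J(W)(z)$ word by word. Comparing the coefficient of a word $U$ on the two sides of \eqref{eq:KZ-equation} and using that left multiplication by $T_i$ prepends the letter $T_i$, one obtains the differential recursion $\frac{d}{dz}J(T_0U)(z)=\frac{1}{z}J(U)(z)$ and $\frac{d}{dz}J(T_1U)(z)=\frac{1}{z-1}J(U)(z)$, with $J(1)=1$; the tangential boundary condition $G_{\vec{01}}(z)z^{-T_0}\to 1$ as $z\to 0^+$ fixes the constants of integration. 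Thus $J(T_{i_1}\cdots T_{i_n})(z)$ is the iterated integral of $\omega_{i_1},\dots,\omega_{i_n}$ from $0$ to $z$, the rightmost letter being the innermost form.

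First I would treat the convergent words, that is claim (a). When $W\in M'$ its rightmost letter is $T_1$, so the innermost form is $\omega_1$, which is regular at the lower endpoint $0$; hence the iterated integral converges without regularization. Writing $\omega_1=-\frac{dz}{1-z}$ and invoking the standard iterated-integral presentation of multiple polylogarithms, each of the $\mathrm{dp}(W)$ occurrences of $T_1$ contributes a factor $-1$, while the reversal of the reading order built into the definition of $\Li_W(z)$ matches the nesting of the integral. This gives $J(W)(z)=(-1)^{\mathrm{dp}(W)}\Li_W(z)$; alternatively one checks it by induction on $\wt(W)$, since $\Li_W(z)$ obeys the parallel recursion $\frac{d}{dz}\Li_W(z)=\frac{1}{z}\Li_{W''}(z)$ or $\frac{1}{1-z}\Li_{W''}(z)$ according as the leftmost letter of $W$ is $T_0$ or $T_1$ (with $W''$ the word $W$ with its leftmost letter removed), and both sides vanish at $z=0$ by the boundary condition.

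For the remaining claims I would factor the solution near the origin as $G_{\vec{01}}(z)=H(z)\,z^{T_0}$, where $H(z)=G_{\vec{01}}(z)z^{-T_0}$ is holomorphic at $0$ with $H(0)=1$; write $H(z)=1+\sum_W \tilde J(W)(z)\,W$. Claim (c) follows at once: $J(T_0)(z)=\int_0^z\omega_0=\log z$, and from the shuffle identity $T_0^{\shuffle r}=r!\,T_0^r$ (equivalently from $z^{T_0}=\exp(T_0\log z)$) one gets $J(T_0^r)(z)=\frac{(\log z)^r}{r!}$; the binomial theorem then gives $\tilde J(T_0^s)(z)=0$ for all $s\geqslant 1$. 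Both $G_{\vec{01}}(z)$ and $z^{T_0}=\exp(T_0\log z)$ are group-like for the shuffle structure, hence so is $H(z)$, which means the functions $\tilde J(W)(z)$ satisfy the shuffle product formula; for divergent words this group-likeness is precisely the tangential-base-point (shuffle) regularization. Feeding in $\tilde J(T_0^s)=0$ for $s\geqslant 1$ together with $\tilde J(W)=J(W)$ for convergent $W$, I would invert the resulting triangular system to obtain $\tilde J(VT_0^s)(z)=(-1)^s\,J(f'(V\shuffle T_0^s))(z)$, where $f'$ records the $T_1$-ending part of the shuffle. Expanding the $T_0^r$-coefficient of $H(z)z^{T_0}$ and substituting claim (a) (the depth is unchanged by shuffling $T_0$'s, so it stays $\mathrm{dp}(W)$ throughout) then yields claim (b) with its coefficient $(-1)^{\mathrm{dp}(W)+s}\frac{(\log z)^t}{t!}$.

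I expect the crux to be this regularization step. Establishing convergence on $\{|z|<1\}$ and uniqueness of the solution with $G_{\vec{01}}(z)z^{-T_0}\to 1$ are routine, and claim (a) is a direct iterated-integral computation; but proving that $H(z)$ is group-like, so that the shuffle relations become available for the functions attached to divergent words, and then carrying out the sign- and depth-bookkeeping in the inversion that produces $f'(V\shuffle T_0^s)$ with the exact coefficient $(-1)^{\mathrm{dp}(W)+s}\frac{(\log z)^t}{t!}$, is where the real work lies.
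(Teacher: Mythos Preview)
The paper does not actually prove this proposition; it is quoted from \cite{F03,F04} and only the statement is recorded here. So there is no ``paper's own proof'' to compare against, only the original references.

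That said, your outline is the standard argument and is the one carried out in \cite{F04}. The differential recursion for $J(W)$, the identification of convergent words with iterated integrals yielding (a), the reading of (c) from $z^{T_0}=\exp(T_0\log z)$, and the factorization $G_{\vec{01}}(z)=H(z)\,z^{T_0}$ with $H$ group-like to handle the divergent tail are exactly the ingredients used there. Your identification of the crux is also accurate: the nontrivial step is the shuffle-regularization identity $\tilde J(VT_0^s)=(-1)^s J\bigl(f'(V\shuffle T_0^s)\bigr)$, which you would prove by induction on $s$ using the decomposition $V\shuffle T_0^s=f'(V\shuffle T_0^s)+(V\shuffle T_0^{s-1})T_0$ together with $\tilde J(V\shuffle T_0^s)=\tilde J(V)\tilde J(T_0^s)=0$ for $s\geqslant 1$. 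Once that is in hand, expanding $J(VT_0^r)=\sum_{s+t=r}\tilde J(VT_0^s)\frac{(\log z)^t}{t!}$ and inserting (a) gives (b) with the stated sign. Your sketch is correct; it is not a different route from the literature but a faithful summary of it.
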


Here $\shuffle$ stands for the shuffle product 
arising from the product of Chen's path iterated integrals. 

We note that a similar type of formula describing each coefficient of the KZ-associator
as a linear combination of multiple zeta values
$$\zeta({\mathbf k}):=\sum_{0<m_1<\cdots<m_n}\frac{1}{m_1^{k_1}\cdots m_n^{k_n}}\in\R$$
which are  limit values of multiple polylogarithm to $1$ for $k_n>1$,
is given in \cite{F03}.

\subsection{Hypergeometric function}\label{subsec: Hypergeometric function}
{\it Gauss's hypergeometric function} (consult \cite{AAR} for example)
is a complex analytic function $\uHG{\C}{a,b}{c}{z}\in {\mathcal O}_{\tilde{\mathcal X}}$
whose restriction to the open unit disk centered at the origin is given by the formal power series
\begin{equation}\label{def eq: HG}
\uHG{\Q}{a,b}{c}{z}:=\sum_{n=0}^\infty\frac{(a)_n(b)_n}{(c)_n n!}z^n
\in\Q(a,b,c)[[z]].
\end{equation}
This series  converges for $|z|<1$,
where $a,b,c$ are complex numbers with $c\neq 0,-1,-2,\dots$. 


The hypergeometric function has been a subject of extensive study for several centuries. 
One of its most celebrated results is the hypergeometric theorem
(due to Gauss):

\begin{equation}\label{eq: hypergeometric equation}
\uHG{\C}{a,b}{c}{1}=\frac{\Gamma(c)\Gamma(c-a-b)}{\Gamma(c-a)\Gamma(c-b)},
\qquad
\text{ when }
\Re(c)>\Re(a+b).
\end{equation}

The hypergeometric function satisfies 
{\it Euler's hypergeometric differential equation}:
\begin{equation}\label{eq: EHG diff eq}
z(1-z)\frac{d^2w}{dz^2}+\{c-(a+b+1)z\}\frac{dw}{dz}-abw=0.
\end{equation}
This equation facilitates the analytic continuation of the function along topological paths
starting from $0$ to any $z$ in $\X(\C)$.

Put
$${X_0}=
\begin{pmatrix}
0 & b \\
0 & u
\end{pmatrix} 
\text{ and }
{Y_0}=
\begin{pmatrix}
0 & 0 \\
a & v
\end{pmatrix}
\in\Mat_2(\C)
$$ 
with
$u=1-c$ and $v=a+b+1-c$. 
The hypergeometric differential equation can be reformulated as 
a system of first-order equations,
the following KZ-like differential equation:
\begin{equation*}
\frac{d}{dz}\vec{g}_\C
=\left\{\frac{1}{z}X_0+\frac{1}{1-z}Y_0\right\}\cdot \vec{g}_\C
\end{equation*}
with
$\vec{g}_\C=\vec{g}_\C(w)=
\begin{pmatrix}
w \\ \frac{z}{b}\frac{dw}{dz}
\end{pmatrix}\in ({\mathcal O}_{\tilde{\mathcal X}})^{\oplus 2}$
when $b\neq 0$.

Conversely consider the substitution $(X_0,-Y_0)$ for $(\e_0,\e_1)$ 
in a solution $G^\C(\e_0,\e_1)(z)$ of the KZ-equation \eqref{eq:KZ-equation}.
When convergent, this substitution yields two solutions $w_1$, $w_2$ of
the hypergeometric differential equation \eqref{eq: EHG diff eq},
which correspond to the $(1,1)$ and $(1,2)$-entries of $G^\C(X_0,-Y_0)(z)$
respectively.
These solutions satisfy:
$$(\vec g_\C(w_1), \vec g_\C(w_2))=G^\C(X_0,-Y_0)(z)\in\Mat_2({\mathcal O}_{\tilde{\mathcal X}}).
$$
This connection between the KZ equation and the hypergeometric differential equation is elaborated in \cite{O}. 
Define the following matrices:
\begin{align}\label{eqC}
&\V_{\vec{01}}^\C(z):=
G^\C_{\vec{01}}(X_0,-Y_0)(z)\cdot
\begin{pmatrix}
1 & 1 \\
0 & \frac{u}{b}
\end{pmatrix}
, \\ 
\notag
&\V^\C_{\vec{10}}(z):=
G^\C_{\vec{10}}(X_0,-Y_0)(z)\cdot
\begin{pmatrix}
1 & 0 \\
\frac{-a}{v} & \frac{v-1}{b}
\end{pmatrix}
, \\ 
\notag
&\V^\C_{\vec{1\infty}}(z):=
G^\C_{\vec{1\infty}}(X_0,-Y_0)(z)\cdot
\begin{pmatrix}
1 & 0 \\
\frac{-a}{v} & \Add{\frac{1-v}{b}}
\end{pmatrix}
, \\ 
\notag
&\V^\C_{\vec{\infty 1}}(z):=
G^\C_{\vec{\infty 1}}(X_0,-Y_0)(z)\cdot
\begin{pmatrix}
1 & \Add{1} \\
\frac{-a}{b} & -1
\end{pmatrix}
, \\ 
\notag
&\V^\C_{\vec{\infty 0}}(z):=
G^\C_{\vec{\infty 0}}(X_0,-Y_0)(z)\cdot
\begin{pmatrix}
1 & 1 \\
\frac{-a}{b} & -1
\end{pmatrix},\\
\notag
&\V^\C_{\vec{0\infty}}(z):=
G^\C_{\vec{0\infty}}(X_0,-Y_0)(z)\cdot
\begin{pmatrix}
1 & 1 \\
0 & \frac{u}{b}
\end{pmatrix}
.
\end{align}
These matrices yield scalar multiples of half of Kummer's 24 solutions (cf. \cite{F21}):
\begin{align}\label{eqD}
\begin{pmatrix} 1, & 0 \end{pmatrix}\cdot \V^\C_{\vec{01}}(z) &=
\begin{pmatrix} \uHG{\C}{a,b}{c}{z}, & z^{1-c}\uHG{\C}{b+1-c,a+1-c}{2-c}{z}\end{pmatrix}, \\
\notag
\begin{pmatrix} 1, & 0 \end{pmatrix}\cdot \V^\C_{\vec{10}}(z)
&=
\begin{pmatrix} \uHG{\C}{a,b}{a+b+1-c}{1-z}, & (1-z)^{c-a-b}\uHG{\C}{c-a,c-b}{c-a-b+1}{1-z}\end{pmatrix}, \\
\notag
\begin{pmatrix} 1, & 0 \end{pmatrix}\cdot \V^\C_{\vec{1\infty}}(z)
&=
\begin{pmatrix} z^{-a}\uHG{\C}{a,a+1-c}{a+b-c+1}{1-\frac{1}{z}}, & 
z^{b-c}(z-1)^{c-a-b}\uHG{\C}{1-b,c-b}{1-a-b+c}{1-\frac{1}{z}}
\end{pmatrix}, \\
\notag
\begin{pmatrix} 1, & 0 \end{pmatrix}\cdot \V^\C_{\vec{\infty 1}}(z)
&=
\begin{pmatrix} z^{-a}\uHG{\C}{a,a+1-c}{a-b+1}{\frac{1}{z}}, & 
z^{-b}\uHG{\C}{b+1-c,b}{b-a+1}{\frac{1}{z}}
\end{pmatrix}, \\
\notag
\begin{pmatrix} 1, & 0 \end{pmatrix}\cdot \V^\C_{\vec{\infty 0}}(z)
&=
\begin{pmatrix} (1-z)^{-a}\uHG{\C}{a,c-b}{a-b+1}{\frac{1}{1-z}},& 
(1-z)^{-b}\uHG{\C}{c-a,b}{1-a+b}{\frac{1}{1-z}}
\end{pmatrix},\\
\notag
\begin{pmatrix} 1, & 0 \end{pmatrix}\cdot \V^\C_{\vec{0\infty}}(z)
&=
\begin{pmatrix} (1-z)^{-a}\uHG{\C}{a,c-b}{c}{\frac{z}{z-1}}, & 
(1-z)^{-a}(\frac{z}{z-1})^{1-c}\uHG{\C}{1-b,a-c+1}{2-c}{\frac{z}{z-1}}
\end{pmatrix}
\end{align}
where we consider the principal branch under appropriate conditions for $a,b,c$.
The remaining half of the 24 solutions can be obtained via 
Euler's
transformation formula
$\uHG{\C}{a,b}{c}{z}=
(1-z)^{c-a-b}\uHG{\C}{c-a,c-b}{c}{z}$.


Specifically, we have
\begin{equation}\label{eq:HG=11}
 \uHG{\C}{a,b}{c}{z}=[\V_{\vec{01}}^\C(z)]_{(1,1)}=
 [G^\C_{\vec{01}}(X_0,-Y_0)(z)]_{(1,1)}
\end{equation}
where the right lower suffix $(1,1)$ means the $(1,1)$-entry of the matrix.

The following theorem, due to Ohno and Zagier, establishes a profound connection between hypergeometric functions and multiple polylogarithms, providing a powerful tool for analyzing these special functions.

\begin{thm}[\cite{OZ}]\label{thm:OZ}
The hypergeometric function can be expressed in terms of multiple polylogarithms as follows:
\begin{equation}\label{eq: expansion of HG}
\uHG{\C}{a,b}{c}{z}=
1+ ab\sum_{\substack{k,n,s> 0 \\k\geqslant n+s, \ n \geqslant s}}
g^\C_0(k,n,s)(z)u^{k-n-s}v^{n-s}
(ab+uv)^{s-1} 
\end{equation}
with
$u=1-c$, $v=a+b+1-c=a+b+u$ and
\begin{equation*}\label{eq: g0}
g^\C_0(k,n,s)(z)=
\sum_{\substack{\wt({\mathbf k})=k, \ \mathrm{dp}({\mathbf k})=n, \ \mathrm{ht}({\mathbf k})=s \\ {\mathbf k}:\text{ admissible index}}} \Li^\C_{\mathbf k}(z).
\end{equation*}
Here an {\it admissible index} refers  to a tuple ${\mathbf k}=(k_1,\dots,k_n)\in\N^n$ ($n\in\N$) 
with $k_n>1$ and 
$\wt({\mathbf k})=k_1+\cdots+k_n$, 
$\mathrm{dp}({\mathbf k})=n$, 
$\mathrm{ht}({\mathbf k})=\sharp\{i\bigm| k_i>1\}$.
\end{thm}

Shu Oi \cite[Theorem 3.1]{O} rediscovered the formula
by combining  Proposition \ref{prop:explicit formulae}
together with Equation \eqref{eq:HG=11}.

\section{$p$-adic case}\label{sec: p-adic case}
We will introduce the $p$-adic hypergeometric function
by a $p$-adic analogue of \eqref{eq:HG=11}.
By exploiting a $p$-adic analogue of \eqref{eq: expansion of HG},
we perform its  analytic prolongation residue-wise
and establish a $p$-adic analogue of Gauss hypergeometric theorem.
These results are then applied to prove Theorems \ref{thm:main_result} and \ref{thm:extended_convergence}.

\subsection{Review on the theory of Coleman integrations}
\label{subsec: review on Coleman integrations}
This subsection provides a concise review of Coleman functions
in the case for $\mathcal{X} = \mathbb{P}^1 \setminus \{0,1,\infty\}$.
which will be used in subsequent arguments.

Let  $\C_p$ be the field of 
$p$-adic complex numbers, which is a topologically complete and algebraically closed extension of the field of $p$-adic numbers $\Q_p$,
equipped with the $p$-adic valuation $|\cdot|_p$.
It plays a role analogous to that of the complex numbers in classical analysis.
Let $\red:{\mathbb P}^1(\C_p)\to{\mathbb P}^1(\overline{\F_p})$ be the reduction map.
where $\overline{\F_p}$ is the algebraic closure of the finite field $\F_p$.
For each subset  $S\subset{\mathbb P}^1(\overline{\F_p})$,
we define the residue class associated with $S$ as
$]S[:=\red^{-1}(S)$.

A {\it $p$-adic logarithm}  is defined as a group homomorphism 
from the multiplicative group $\C_p^\times$ to the additive group $\C_p$.
It admits the usual Taylor expansion $\sum_{k=1}^\infty\frac{(-1)^{k+1}(z-1)^k}{k}$ on the residue class $]1[$.
Here there exists an isomorphism  $\C_p\simeq p^\Q\times W\times ]1[$
where $W$ is the group of roots of unity whose order is prime to $p$
and $]1[$ corresponds to a subgroup of $\C_p^\times$.
The $p$-adic logarithm is uniquely determined outside the region on $]0[$ and $]\infty[$,
and is commonly denoted by $\log^p(z)$.
Otherwise, it is not uniquely determined unless one specifies the image of
$p$, denoted by $\varpi\in\C_p$.
In such cases, the associated  $p$-adic logarithm is written as 
$\log^{p,\varpi}(z)$, thus we have
$\log^{p,\varpi}(p)=\varpi$.

For $x\in{\mathcal X}(\overline{\F_p})$, 
let us choose a local parameter (an analytic isomorphism)
$z_x:]x[\overset{\simeq}{\to}]0[$.
Then the algebra $A(]x[)$ of {\it rigid analytic analytic functions} on $]x[$  (cf. \cite{BGR}) 
can then be identified with the algebra
$$
\{\sum_{m=0}^\infty c_mz_x^m\in\C_p[[z_x]]\mid |c_m|_p\lambda^n\to 0
\text{ for any } 0<\lambda<1\}.
$$
While for $x=0,1,\infty$, we consider a local parameter
$z_x:\ ]x[-\{x\} \overset{\simeq}{\to}\ ]0[-\{0\}$ and 
define the annuli $U_r:=\{z\in\C_p\mid r<|z|_p<1\}$ ($0\leqslant r<1$).
We then define the space:
$$
A_\varpi^{\log}(]x[):= \underset{r\to 1}{\mathrm{lim }}
A\Bigl(]x[ \ \cap z_x^{-1}(U_r)\Bigr)\Bigl[\log^{p,\varpi}(z_x)\Bigr]
$$
where 
$A\Bigl(]x[ \ \cap z_x^{-1}(U_r)\Bigr)$
denotes the algebra of functions converging on $]x[ \ \cap z_x^{-1}(U_r)$.
This algebra can be identified with the space
$$
\left\{\sum\limits_{m=-\infty}^{\infty}c_m\cdot z_x^m \in\C_p[[z_x^\pm]]
\Bigm|
\frac{|c_m|_p}{\lambda^m}\to 0 \text{ when  } m\to\pm\infty
\text{ for any } 0<\lambda<r 
\right\}.
$$
It is worth noting that 
$\log^{p,\varpi}(z_x)$ is a local analytic function on 
$]x[ - x$,  and remains transcendental over the direct limit
$\underset{r\to 1}{\mathrm{lim }}
A\Bigl(]x[ \ \cap z_x^{-1}(U_r)\Bigr)$.
Moreover, the definition of $A_\varpi^{\log}(]x[)$
does not depend on the specific choice of the local parameter $z_x$.
Put 
${\mathcal O}_\varpi^\loc=
\prod_{x\in{\mathcal X}}A(]x[)\oplus
\prod_{x\not\in{\mathcal X}}A_\varpi^{\log}(]x[)$
and
${\Omega}_\varpi^\loc=
\prod_{x\in{\mathcal X}}\Omega(]x[)\oplus
\prod_{x\not\in{\mathcal X}}\Omega_\varpi^{\log}(]x[)$
with 
$\Omega(]x[)={A}(]x[)dz_x$ and
$\Omega_\varpi^{\log}(]x[)={A}_\varpi^{\log}(]x[)dz_x$.
The componentwise differential yields a map
$
d^\loc: \mathcal O^\loc_\varpi\to  \Omega^\loc_\varpi
$
with 
$$
\ker d^\loc=\prod_{x\in {\mathbb P}^1(\overline{\F_p})}\C_p 
\qquad\text{ and }\qquad
\cok \ d^\loc=\{0\}.$$

Next, we consider a series of the form
\begin{equation*}\label{eq:MLdecomp}
f(z)=\sum_{m\geqslant 0}c_{1,m}z^{m}+\sum_{m> 0}\frac{c_{2,m}}{z^m}
+\sum_{m>0}\frac{c_{3,m}}{(z-1)^m}
\in\C_p[[z,z^{-1},{(z-1)}^{-1}]]
\end{equation*}
where the coefficients satisfy the condition:
$|c_{i,m}|_p\cdot \lambda^{-m}\to 0$
as $m\to \infty$ for $i=1,2,3$.
Such a series defines a function on a subset of ${\mathbb P}^1(\C_p)$ 
which extends beyond $]{\mathcal X}(\overline\F_p)[$.
These functions are referred to as {\it overconvergent functions}.
We denote the algebra of such overconvergent functions by $\mathcal O^\dag$.
Put $\Omega^\dag:={\mathcal O}^\dag dz$.
The differential
$
d^\dag: \mathcal O^\dag \to \Omega^\dag 
$
is defined with
$$
\ker d^\dag=\C_p
\qquad\quad\text{ and }\qquad
\dim_{\C_p} \cok \ d^\dag= 2 
$$
and is compatible with natural inclusions
$\mathcal O^\dag\subset \mathcal O_\varpi^\loc$  and
$\Omega^\dag\subset \mathcal O_\varpi^\loc$
obtained by residuewise restriction 
(cf. consult \cite{Ber} and related references.)

Finally, the algebra $\mathcal O^\col_\varpi$ of {\it Coleman functions}, attached to a branch $\varpi $, 
is equipped with the differential operator
$d^\col:\mathcal O^\col_\varpi\to \Omega^\col_\varpi:=\mathcal O^\col_\varpi dz$.
It is defined as the minimal intermediate subalgebra 
which makes  the following  diagram commutative
\[
\begin{tikzcd}
\mathcal O^\loc_\varpi\arrow[r, "d^\loc"] 
&  \Omega^\loc_\varpi  \\
\mathcal O^\col_\varpi \arrow[u, hook] \arrow[r, "d^{\col}"]
& \Omega^\col_\varpi  \arrow[u, hook] \\
\mathcal O^\dag \arrow[u, hook] \arrow[r, "d^\dag"]
& \Omega^\dag \arrow[u, hook].
\end{tikzcd}
 \]
and  satisfies the following properties
\begin{equation*}\label{eq:ker and cok of d Col}
\ker d^{\col}=\C_p
\qquad\quad\text{ and }\qquad
\cok \ d^{\col}=\{0\}.
\end{equation*}
The {\it Coleman integration} 
$
\int_{(\varpi)}:\ \Omega_\varpi^{\col} \to \mathcal O^\col_\varpi(X) / \C_p,
$
is defined as a section of $d^\col$ modulo constants.
Since the integration map is defined up to a global constant,
the expression
$\int_a^b f(z)dz:=F(b)-F(a)\in \mathcal O^\col_\varpi$
is well-defined for $a,b\in \mathbb P^1(\C_p)$ and $f\in \mathcal O^\col_\varpi$
where $F(z)$ is any lift of $\int f(z)dz$.
Whenever the values $F(a)$ and $F(b)$ are well-defined
for given points $a, b \in \mathbb P^1(\C_p)$,
we denote their difference $F(b)-F(a)$ as $\int_a^b f(z)dz$.
This notation ensures that the integral is independent of the choice of the primitive $F(z)$, as any two choices differ by a constant.
By construction, 
Coleman functions satisfy the following important properties 

\begin{prop}[\cite{Col}]
(i). {\bf Coincidence Principle:}
If $f\in \mathcal O^\col_{\varpi}$ vanishes on a residue class,
then  $f$ is identically $0$. 
Consequently, the restriction map of Coleman functions to each residue class
$]x[$  ($x\in{\mathbb P}^1(\overline{{\mathbb F}_p}))$
is injective.

(ii). {\bf Branch Independency Principle:}
For any branches $\varpi_1, \varpi_2\in\C_p$,
 there exists an isomorphism
$\iota_{\varpi_1,\varpi_2}:
\mathcal O^\loc_{\varpi_1}\simeq
\mathcal O^\loc_{\varpi_2}$
which acts as the identity in the first component of 
$\mathcal O_{\varpi_1}^\loc$
and 
replaces  $\log^{p,\varpi_1}$ with $\log^{p,\varpi_2}$ in its second component.
Similarly, an analogous isomorphism exists for differentials
 $\iota_{\varpi_1,\varpi_2}:\Omega^\loc_{\varpi_1}\simeq \Omega^\loc_{\varpi_2}$.
These mappings induce isomorphisms  
$\iota_{\varpi_1,\varpi_2}:\mathcal O^\col_{\varpi_1}\simeq \mathcal O^\col_{\varpi_2}$
and
$\iota_{\varpi_1,\varpi_2}:\Omega^\col_{\varpi_1}\simeq \Omega^\col_{\varpi_2}$.
The  Coleman integration maps are compatible with these isomorphisms,
as shown in the following  commutative diagram
\[
\begin{tikzcd}
\Omega^\col_{\varpi_1}\arrow[r, "\int_{(\varpi_1)}"] 
&  \mathcal O^\col_{\varpi_1}/\C_p \\
\Omega^\col_{\varpi_2} \arrow[u, "\iota_{\varpi_1,\varpi_2}"] \arrow[r, "\int_{(\varpi_2)}"]
& \mathcal O^\col_{\varpi_2}/\C_p  \arrow[u,, "\iota_{\varpi_1,\varpi_2}"'] .
\end{tikzcd}
 \]
\end{prop}

For further details on the theory of Coleman integration,
see such as \cite{Bes, Col}.

\begin{eg}[\cite{F04}]
\label{pMPL and pMZV}
(i).
For a tuple ${\mathbf k}=(k_1,\dots,k_n)\in\N^n$ ($n\in\N$),
the {\it $p$-adic multiple polylogarithm}, denoted by 
$\Li^{p,\varpi}_{\mathbf k}(z)\in\mathcal O^\col_\varpi$,
which  induces the map
$
\Li^{p,\varpi}_{\mathbf k}:{\mathcal X}(\C_p)\to \C_p
$ 
is introduced in \cite{F04}.
This function is constructed so as to satisfy 
the differential equation \eqref{eq: diff eq MPL}.
The special case $\Li_1^{p,\varpi}(z)=-\log^{p,\varpi}(1-z)$ recovers the $p$-adic logarithm.
Its restriction to $]0[$ is given by
the same power series expansion as in \eqref{eq:MPL}. 
On the tube $]\mathbb P^1\setminus\{0,1,\infty\}[$, 
it is independent of the choice of branch parameter $\varpi$; 
hence, from this point, the symbol $\varpi$ will occasionally be omitted.

(ii).
The {\it $p$-adic multiple zeta value}
 $\zeta_p({\bf k})\in\Q_p$ 
is defined as a certain limit of $\Li^{p,\varpi}_{\bf k}(z)$ as $z\to 1$.
This limit is taken under the condition that the ramification index of $z$
remains bounded.
This ensures convergence of the limit when $k_n>1$
and the independence of $\zeta_p(\bf k)$
from the choice of the branch parameter $\varpi$.
\end{eg}

\subsection{Formal version of $p$-adic hypergeometric function}\label{sec:def of pHG}
We introduce a formal version of $p$-adic hypergeometric function
by exploiting a fundamental solution of the $p$-adic KZ equation.

Let $\varpi\in\C_p$ be a chosen branch of the $p$-adic logarithm. 
In \cite{F04}, a $p$-adic solution
of the KZ equation \eqref{eq:KZ-equation} is constructed, 
along with its fundamental solution
$$G_{\vec{01}}^{p,\varpi}(\e_0,\e_1)(z) 
\in\mathcal O_\varpi^\col\langle\langle \e_0,\e_1\rangle\rangle
$$
which induces the map
$
G_{\vec{01}}^{p,\varpi}(\e_0,\e_1):{\mathcal X}(\C_p)\to
\C_p\langle\langle \e_0,\e_1\rangle\rangle.
$
This fundamental solution exhibits a specific asymptotic behavior
$z^{\e_0}$  in a neighborhood of $z=0$.
In \cite[Theorem 3.15]{F04}, it is further shown that  all coefficient of 
$G_{\vec{01}}^{p,\varpi}(\e_0,\e_1)(z)$ is described by combinations of
$p$-adic MPLs 
$\Li^{p,\varpi}_{\mathbf k}(z)$'s (cf. Example \ref{pMPL and pMZV}) and 
the $p$-adic logarithm $\log^{p,\varpi}(z)$.
These coefficients are described using exactly the same formulas as those 
in Proposition \ref{prop:explicit formulae}.
Similar to the construction in \cite{Dr89}, $p$-adic analogues of the other five fundamental solutions are given as follows:
\begin{align}\label{eq: pB}
G^{p,\varpi}_{\vec{10}}(\e_0,\e_1)(z)
&:=G^{p,\varpi}_{\vec{01}}(\e_1,\e_0)(1-z)
, \\ \notag
G^{p,\varpi}_{\vec{1\infty}}(\e_0,\e_1)(z)
&:=G^{p,\varpi}_{\vec{01}}(\e_1,\e_\infty)(1-\frac{1}{z})
, \\ \notag
G^{p,\varpi}_{\vec{\infty 1}}(\e_0,\e_1)(z)
&:=G^{p,\varpi}_{\vec{01}}(\e_\infty,\e_1)(\frac{1}{z})
, \\ \notag
G^{p,\varpi}_{\vec{\infty 0}}(\e_0,\e_1)(z)
&:=G^{p,\varpi}_{\vec{01}}(\e_\infty,\e_0)(\frac{1}{1-z})
, \\ \notag
G^{p,\varpi}_{\vec{0\infty}}(\e_0,\e_1)(z)
&:=G^{p,\varpi}_{\vec{01}}(\e_0,\e_\infty)(\frac{z}{z-1})
\end{align}
with $\e_\infty=-\e_0-\e_1$.
In \cite{F04}
the $p$-adic KZ associator $\varPhi^p_\KZ(\e_0,\e_1)\in\Q_p\langle\langle \e_0,\e_1\rangle\rangle$
is defined by the quotient
\begin{equation}\label{eq: Phi^p and G}
\varPhi^p_\KZ(\e_0,\e_1)=
G^{p,\varpi}_{\vec{10}}(\e_0,\e_1)(z)^{-1}\cdot
G^{p,\varpi}_{\vec{01}}(\e_0,\e_1)(z) .
\end{equation}
It is shown that this associator is independent of the choice of 
the branch parameter $\varpi$
and that each coefficient is expressed as a linear combination of
$p$-adic multiple zeta values $\zeta_p({\bf k})\in\Q_p$
(cf. Example \ref{pMPL and pMZV}).
We note that 
$G^{p,\varpi}_{\vec{01}}(\e_0,\e_1)(z)= G^{p,\varpi}_{\vec{0\infty}}(\e_0,\e_1)(z)$
due to $\zeta_p(2)=0$.


Let $\aaa,\bbb,\ccc$ be formal variables
and consider  $2\times 2$-matrices
$$
{\mathsf X}=
\begin{pmatrix}
0 & \bbb \\
0 & \uuu
\end{pmatrix} 
\qquad\text{and}\qquad
{\mathsf Y}=
\begin{pmatrix}
0 & 0 \\
\aaa & \vvv
\end{pmatrix}
$$
with
$\uuu=1-\ccc$ and $\vvv=\aaa+\bbb+1-\ccc$.
The substitution of
$(X,-Y)$ for $(\e_0,\e_1)$ in $G_{\vec{01}}^{p,\varpi}(\e_0,\e_1)(z)$
converges in
$\Mat_2({\mathcal O^\col_\varpi}[[\aaa,\bbb,\uuu]])$, 
and we denote the resulting matrix by
$
G^{p,\varpi}_{\vec{01}}({\mathsf X},-{\mathsf Y})(z).
$

Our definition of $\uHG{p,\varpi}{\aaa,\bbb}{\ccc}{z}$
is motivated by the equation \eqref{eq:HG=11}
in the complex case:

\begin{defn}\label{defn: formal version of pHG}
We define {\it the formal version of the $p$-adic hypergeometric function}
$\uHG{p,\varpi}{\aaa,\bbb}{\ccc}{z}$ 
for $z\in\X(\C_p)$
as its $(1,1)$-entry
\begin{equation*}\label{eq: HG in Ocol}
\uHG{p,\varpi}{\aaa,\bbb}{\ccc}{z}:=
[G^{p,\varpi}_{\vec{01}}({\mathsf X},-{\mathsf Y})(z)]_{(1,1)}
\in{\mathcal O^\col_\varpi}
[[\aaa,\bbb,\uuu]].
\end{equation*}
\end{defn}

This defines a map
$$
\uHG{p,\varpi}{\aaa,\bbb}{\ccc}{-}: {\mathcal X}(\C_p)\to\C_p[[\aaa,\bbb,\uuu]]
$$
which sends
$z\in{\mathcal X}(\C_p)\mapsto \uHG{p,\varpi}{\aaa,\bbb}{\ccc}{z}$.

By arguments analogous to those in the complex case, 
we can deduce a $p$-adic analogue of 
Ohno-Zagier formula \eqref{eq: expansion of HG}.

\begin{lem}\label{lem: p-adic Ohno-Zagier}
Let $\varpi\in\C_p$ be a chosen branch. Then the following identity holds in
 ${\mathcal O_\varpi^\col}[[\aaa,\bbb,\ccc-1]]$:
\begin{equation*}\label{eq: p-adic Ohno-Zagier}
\uHG{p,\varpi}{\aaa,\bbb}{\ccc}{z}=
1+ \aaa\bbb\sum_{\substack{k,n,s> 0 \\k\geqslant n+s, \ n \geqslant s}}
g^{p,\varpi}_0(k,n,s)(z)\uuu^{k-n-s}\vvv^{n-s}
(\aaa\bbb+\uuu\vvv)^{s-1} 
\end{equation*}
with $\uuu=1-\ccc$, $\vvv=\aaa+\bbb+1-\ccc=\aaa+\bbb+\uuu$ and
\begin{equation*}
g^{p,\varpi}_0(k,n,s)(z)=
\sum_{\substack{\wt({\mathbf k})=k, \ \mathrm{dp}({\mathbf k})=n, \ \mathrm{ht}({\mathbf k})=s \\ {\mathbf k}:\text{ admissible index}}} \Li^{p,\varpi}_{\mathbf k}(z)
\quad \in {\mathcal O^\col_\varpi}.
\end{equation*}
\end{lem}

\begin{proof}
It is shown in \cite[Theorem 3.15]{F04} that
the coefficients of $G_{\vec{01}}^{p}(\e_0,\e_1)(z)$ are expressed as polynomial combinations of $p$-adic multiple polylogarithms $\Li^{p,\varpi}_{\mathbf{k}}(z)$ and the $p$-adic logarithm $\log^{p,\varpi}(z)$, following precisely the same formulas presented in Proposition \ref{prop:explicit formulae} for the complex case. 

Leveraging this result and applying the matrix calculations detailed in \cite[Lemma 3.1 and (65)]{O},
we obtain the claimed formula.
\end{proof}



Our function also satisfies the hypergeometric differential equation:

\begin{prop}
The function 
$w=\uHG{p,\varpi}{\aaa,\bbb}{\ccc}{z}
=[G^{p,\varpi}_{\vec{01}}({\mathsf X},-{\mathsf Y})(z)]_{(1,1)}
\in{\mathcal O^\col_\varpi}[[\aaa,\bbb,\uuu]]
$
is a solution
of the differential equation
\eqref{eq:formal HG diff eq}.
Furthermore its derivative with respect to $z$ is given by
$\frac{dw}{dz}=\frac{\bbb}{z}[G^{p,\varpi}_{\vec{01}}({\mathsf X},-{\mathsf Y})(z)]_{(2,1)}$.
\end{prop} 

\begin{proof}
Applying the matrix calculations detailed in \cite[Lemma 3.1]{O},
we find that the $(2,1)$-entry of 
$[G^{p,\varpi}_{\vec{01}}({\mathsf X},-{\mathsf Y})(z)]_{(2,1)}$
is given by
\begin{equation*}
\aaa\sum_{\substack{k,n,s> 0 \\k\geqslant n+s, \ n \geqslant s}}
g^{p,\varpi}_1(k-1,n,s)(z)\uuu^{k-n-s}\vvv^{n-s}
(\aaa\bbb+\uuu\vvv)^{s-1} 
\in {\mathcal O_\varpi^\col}[[\aaa,\bbb,\uuu]]
\end{equation*}
with 
\begin{equation*}
g^{p,\varpi}_1(k,n,s)(z)=
\sum_{\substack{\wt({\mathbf k})=k, \ \mathrm{dp}({\mathbf k})=n, \ \mathrm{ht}({\mathbf k})=s }} \Li^{p,\varpi}_{\mathbf k}(z)
\quad \in {\mathcal O^\col_\varpi}.
\end{equation*}
This expression coincides with
$\frac{z}{\bbb}\frac{d}{dz}\left(\uHG{p,\varpi}{\aaa,\bbb}{\ccc}{z}\right)$.
Consequently, the first column of
$G^{p,\varpi}_{\vec{01}}({\mathsf X},-{\mathsf Y})(z)$
provides a $p$-adic solution of the system:
\begin{equation*}
\frac{d}{dz}\vec{g}
=\left\{\frac{1}{z}{\mathsf X}+\frac{1}{1-z}{\mathsf Y}\right\}\cdot \vec{g}
\end{equation*}
with
$\vec{g}=\vec{g}(w)=
\begin{pmatrix}
w \\ \frac{z}{\bbb}\frac{dw}{dz}
\end{pmatrix}\in {\mathcal O_\varpi^\col}[[\aaa,\bbb,\uuu]]^{\oplus 2}$.
Analogous to the complex case, this differential equation can be reformulated to yield \eqref{eq:formal HG diff eq}.
\end{proof} 

\begin{rem}
As shown in \cite[\S 3]{O} in the complex case,
the $(2,1)$-entry of $[G^{p,\varpi}_{\vec{01}}({\mathsf X},-{\mathsf Y})(z)]$ is given by
$\langle z\rangle^\uuu \cdot\uHG{p,\varpi}{\aaa+\uuu,\bbb+\uuu}{1+\uuu}{z}$
where $\langle z\rangle^\uuu:=\sum_{n=0}^\infty\frac{(\log^{p,\varpi}(z)\cdot \uuu)^n}{n!}$.
\end{rem}

\subsection{Local behavior around $z=0$}\label{subsec: local behavior at 0}
This subsection proves the claim of 
Theorems \ref{thm:main_result} and \ref{thm:extended_convergence}
in the case of $|z|_p<1$.



Since the restriction of $\Li^{p,\varpi}_{\bf k}(z)$ to $]0[$ is given by 
\eqref{eq:MPL} in $\Q[[z]]$,
the restriction of the formal version 
$\uHG{p,\varpi}{\aaa,\bbb}{\ccc}{z}\in{\mathcal O_\varpi^\col}[[\aaa,\bbb,\uuu]]$
to $]0[$
 determines the series 
\begin{equation*}
\uHG{p,\varpi}{\aaa,\bbb}{\ccc}{\zzz}_{]0[}\in\Q[[\aaa,\bbb,\uuu,\zzz]]
\end{equation*}
with $\uuu=1-\ccc$ by Lemma \ref{lem: p-adic Ohno-Zagier}.

\begin{prop}
The restriction $\uHG{p,\varpi}{\aaa,\bbb}{\ccc}{\zzz}_{]0[}$ 
is independent of  any choice of branch parameter $\varpi$
(from this point onward, we suppress the symbol $\varpi$)
and agrees with the usual power series expansion, that is, 
we have
\begin{equation}\label{eq: formal pHG at 0}
\uHG{p}{\aaa,\bbb}{\ccc}{\zzz}_{]0[}
=\sum_{n=0}^\infty\frac{(\aaa)_n(\bbb)_n}{(\ccc)_n n!}\zzz^n
\in
\Q[\aaa,\bbb,\uuu]_{(\uuu)} [[\zzz]],
\end{equation}
where $\Q[\aaa,\bbb,\uuu]_{(\uuu)}$ means the localization of
$\Q[\aaa,\bbb,\uuu]$ at the prime ideal $(\uuu)$.
\end{prop}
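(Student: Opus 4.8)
The plan is to reduce the statement to the classical complex identity \eqref{eq: expansion of HG}, exploiting that the residue disk $]0[$ is precisely where the $p$-adic and complex multiple polylogarithms share the same expansion. By the $p$-adic analogue of Oi's formula displayed after Definition \ref{defn: formal version of pHG}, the series $\uHG{p,]0[}{\aaa,\bbb}{\ccc}{\zzz}$ is obtained from the right-hand side of \eqref{eq: expansion of HG} upon replacing each $\Li^p_{\mathbf k}(z)$ by its restriction to $]0[$. Two features make this clean: first, \eqref{eq: expansion of HG} and \eqref{eq: g0} involve only the $\Li_{\mathbf k}$'s attached to admissible indices and no factor $\Log(z)$, so no logarithmic singularity survives and the restriction is genuinely a power series in $\zzz$; second, by hypothesis this restriction is the rational series \eqref{eq:MPL}, which is literally the Taylor expansion at $z=0$ of the complex $\Li_{\mathbf k}(z)$. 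Hence $\uHG{p,]0[}{\aaa,\bbb}{\ccc}{\zzz}$ coincides, coefficient by coefficient, with the formal $\zzz$-expansion of the right-hand side of \eqref{eq: expansion of HG}, and it remains only to identify this with $\sum_{n\geq 0}\frac{(\aaa)_n(\bbb)_n}{(\ccc)_n n!}\zzz^n$.

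For the identification I would first record a finiteness observation that controls each coefficient. Fixing a monomial $\zzz^N$ and a power $\uuu^m$ of $\uuu=1-\ccc$, the coefficient of $\zzz^N$ in $g_0(k,n,s)$ read via \eqref{eq:MPL} is supported on $m_n=N$ and hence vanishes unless the depth satisfies $n\leq N$; moreover, since in \eqref{eq: expansion of HG} it is weighted by $\uuu^{k-n-s}\vvv^{n-s}(\aaa\bbb+\uuu\vvv)^{s-1}$ with $\vvv=\aaa+\bbb+\uuu$, the total $\uuu$-degree is at least $k-n-s$, so the constraint ``$\uuu$-degree $=m$'' forces $k\leq m+2N$. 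Thus only finitely many triples $(k,n,s)$ and finitely many admissible indices contribute to the coefficient of $\uuu^m\zzz^N$, each a rational number times a polynomial in $\aaa,\bbb$. Now I invoke the complex side: by \eqref{eq:HG=11} and \eqref{def eq: HG} the right-hand side of \eqref{eq: expansion of HG} converges on $|z|<1$ to $\sum_{n\geq 0}\frac{(a)_n(b)_n}{(c)_n n!}z^n$, which for fixed $a,b$ is holomorphic in $c$ near $c=1$; its $z^N$-coefficient is the rational function $\frac{(a)_N(b)_N}{(c)_N N!}$, regular at $c=1$ because $(c)_N=N!\neq 0$ there. The finiteness just noted legitimises interchanging extraction of the $z^N$-coefficient with expansion in $u=1-c$, so the coefficient of $\uuu^m\zzz^N$ in the formal expansion of \eqref{eq: expansion of HG} equals that of $\sum_n\frac{(\aaa)_n(\bbb)_n}{(\ccc)_n n!}\zzz^n$ for all complex $(a,b)$ and $c$ near $1$; two polynomials in $(\aaa,\bbb)$ agreeing on $\C^2$ are equal, which yields the desired formal identity. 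Finally, since each $\frac{(\aaa)_n(\bbb)_n}{(\ccc)_n n!}$ has denominator $(\ccc)_n$ invertible in $\Q[\aaa,\bbb,\ccc]_{(\ccc-1)}$, the series indeed lies in $\Q[\aaa,\bbb,\ccc]_{(\ccc-1)}[[\zzz]]$.

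The main obstacle is precisely the interchange in the middle step: passing from the analytic equality of $z^N$-coefficients, valid for each complex specialization, to an equality of formal power series in $\ccc-1$ over $\Q$. The grading/finiteness argument of the second paragraph is what neutralises it, reducing every bidegree $(N,m)$ to a finite algebraic computation and thereby dispensing with any convergence input beyond the classical convergence of $\uHG{}{a,b}{c}{z}$ on $|z|<1$.
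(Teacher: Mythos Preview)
Your proof is correct and follows essentially the same route as the paper: both reduce the statement to the observation that Oi's identity \eqref{eq: expansion of HG} equals \eqref{def eq: HG} already as an equation in the rational structure $\Q[[\aaa,\bbb,\ccc-1,\zzz]]$, after which the $p$-adic claim is immediate from the shared power-series expansion \eqref{eq:MPL} on $]0[$. The paper simply asserts this lift, whereas you supply the finiteness argument that makes it rigorous; your version is more detailed but not different in substance.
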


\begin{proof}
The formulas \eqref{def eq: HG}
and \eqref{eq: expansion of HG} in the complex case follow the identity:
$$
\sum_{n=0}^\infty\frac{(a)_n(b)_n}{(c)_n n!}z^n
=1+ ab\sum_{\substack{k,n,s> 0 \\k\geqslant n+s, \\ n \geqslant s}}
\sum_{\substack{\wt({\mathbf k})=k, \\ \mathrm{dp}({\mathbf k})=n, \\ \mathrm{ht}({\mathbf k})=s }} \Li^{\Q}_{\mathbf k}(z)
u^{k-n-s}v^{n-s}
(ab+uv)^{s-1},
$$
which holds for variables
$a$, $b$, $u$, $z$ in a convergent domain.
This identity lifts to an equation with the same expression in the rational structure
$\Q[[\aaa,\bbb,\uuu,\zzz]]$:
$$
\sum_{n=0}^\infty\frac{(\aaa)_n(\bbb)_n}{(\ccc)_n n!}\zzz^n
=1+ \aaa\bbb\sum_{\substack{k,n,s> 0 \\k\geqslant n+s, \\ n \geqslant s}}
\sum_{\substack{\wt({\mathbf k})=k, \\ \mathrm{dp}({\mathbf k})=n, \\ \mathrm{ht}({\mathbf k})=s }} \Li^{\Q}_{\mathbf k}(\zzz)
\uuu^{k-n-s}\vvv^{n-s}
(\aaa\bbb+\uuu\vvv)^{s-1}.
$$
Since the restriction
$\uHG{p,\varpi}{\aaa,\bbb}{\ccc}{\zzz}_{]0[}$ is given by 
the right hand side of the above equation and 
$\frac{1}{(\ccc)_n}=\frac{1}{(1-\uuu)(2-\uuu)\cdots (n-\uuu)}$ is in $\Q[\uuu]_{(\uuu)}$
with $\uuu=1-\ccc$,
we see that
$\uHG{p,\varpi}{\aaa,\bbb}{\ccc}{\zzz}_{]0[}$ is given by
\eqref{eq: formal pHG at 0} in 
$\Q[\aaa,\bbb,\uuu]_{(\uuu)}[[\zzz]]$.
\end{proof}

%

By \eqref{eq: formal pHG at 0}, we
regard $\uHG{p}{\aaa,\bbb}{\ccc}{\zzz}_{]0[}\in 
\Q[\aaa,\bbb,\uuu]_{(\uuu)} [[\zzz]]$,
which allows
the specialization $(\aaa,\bbb,\ccc)\mapsto(\alpha,\beta,\gamma)\in\Q_p^3$
to be carried out whenever the parameters avoid the poles of the expansion--namely, when
they are not  located on poles
(that is, when $\gamma\neq 0,-1,-2,\dots$).
In such cases, the specialized function is given by
\begin{equation}\label{eq: pHG specialization abc}
\uHG{p}{\alpha,\beta}{\gamma}{\zzz}_{]0[}
=\sum_{n=0}^\infty\frac{(\alpha)_n(\beta)_n}{(\gamma)_n n!}\zzz^n
\in\Q_p[[\zzz]].
\end{equation}

To investigate its convergence,
we prepare a fundamental (and maybe well-known) lemma below.

\begin{lem}\label{lem:wellknown}
Let $n$ be a positive integer whose $p$-adic expansion is given by 
$n=a_0+a_1p+\cdots+a_rp^r$  ($a_i\in [0,p-1]$).
Put $s_p(n)=a_0+a_1+\cdots+a_r$.
Then we have
$$v_p(n!)=\frac{n-s_p(n)}{p-1}.
$$
Here, 
$v_p(x)$ is the additive $p$-adic valuation.
\end{lem}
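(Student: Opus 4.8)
The statement to prove is Legendre's formula: for a positive integer $n$ with $p$-adic digit sum $s_p(n)$, we have $\ord_p(n!)=\frac{n-s_p(n)}{p-1}$.

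\medskip

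The plan is to establish this by combining the classical Legendre formula $\ord_p(n!)=\sum_{i=1}^\infty\lfloor n/p^i\rfloor$ with a telescoping computation using the $p$-adic digits. First I would recall the standard count of the power of $p$ dividing $n!$: among $1,2,\dots,n$, exactly $\lfloor n/p\rfloor$ are divisible by $p$, exactly $\lfloor n/p^2\rfloor$ are divisible by $p^2$, and so on, so that summing the multiplicities gives $\ord_p(n!)=\sum_{i=1}^\infty\lfloor n/p^i\rfloor$, a finite sum since $\lfloor n/p^i\rfloor=0$ once $p^i>n$.

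\medskip

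Next I would express each floor in terms of the digits. Writing $n=a_0+a_1p+\cdots+a_rp^r$, the division by $p^i$ simply drops the lowest $i$ digits, so
\begin{equation*}
\left\lfloor \frac{n}{p^i}\right\rfloor=a_i+a_{i+1}p+\cdots+a_rp^{r-i}
=\sum_{j=i}^r a_j p^{j-i}.
\end{equation*}
Summing over $i$ from $1$ to $r$ and interchanging the order of summation, the digit $a_j$ contributes the geometric sum $a_j(1+p+\cdots+p^{j-1})=a_j\frac{p^j-1}{p-1}$. Hence
\begin{equation*}
\ord_p(n!)=\sum_{j=1}^r a_j\frac{p^j-1}{p-1}
=\frac{1}{p-1}\left(\sum_{j=0}^r a_j p^j-\sum_{j=0}^r a_j\right)
=\frac{n-s_p(n)}{p-1},
\end{equation*}
where I may include the $j=0$ term freely since $\frac{p^0-1}{p-1}=0$. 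This yields the claimed identity.

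\medskip

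I do not anticipate a genuine obstacle here, as the result is entirely elementary; the only point requiring mild care is the interchange of the double sum and the correct bookkeeping of the geometric series, which is why I would present the computation explicitly rather than by appeal. An alternative, perhaps cleaner, route I might mention is induction via the recursion $\ord_p(n!)=\ord_p((n-1)!)+\ord_p(n)$, together with the observation that passing from $n-1$ to $n$ changes $s_p$ by $1-(p-1)\ord_p(n)$ (carrying in base $p$ reduces trailing digits), but the telescoping argument above is the most transparent and self-contained, so that is the one I would write out.
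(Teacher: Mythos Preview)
Your proof is correct. The paper's own proof consists of the single sentence ``The proof is given by a direct computation,'' so your explicit derivation via the classical Legendre sum $\sum_{i\geq 1}\lfloor n/p^i\rfloor$ and the digit-wise telescoping is precisely the kind of computation the paper is gesturing at without writing out.
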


\begin{proof}
By Legendre's formula, the $p$-adic valuation of $n!$ is given as
$
v_p(n!) = \sum_{s=1}^\infty \left\lfloor \frac{n}{p^s} \right\rfloor,
$
which is calculated to be $\frac{n-(a_0+a_1+\cdots+a_r)}{p-1}$.
\end{proof}

\begin{thm}\label{thm: convergence of pHG on 0}
(1).
Put 
$\alpha,\beta,\gamma\in {\Z_p}$.
Let
$\sum_{i=0}^\infty c^-_ip^{m_i}$
with 
$0\leqslant m_0<m_1<\cdots$ and $c^-_i\in [1,p-1]$ (we note that it says
$c^-_i\neq 0$ for all $i$)
be the $p$-adic expansion of $-\gamma\in\Z_p$.
Assume that 
\begin{equation}\label{eq:assumption}
(S:=)\ \varlimsup_n \frac{m_n}{p^{m_{n-1}}}  < \infty.
\end{equation}
Then 
the specialization \eqref{eq: pHG specialization abc}
converges 
on $\{\zzz\in\C_p\bigm| |\zzz|_p<p^{-\frac{S}{p-1}}\}$.

(2).
Let 
$\alpha,\beta\in\Z_p$ and $\gamma\in {\Z_{(p)}}:=\Z_p\cap \Q$ 
with  $\gamma \neq 0,-1,-2,\dots$.
Then  
the series 
\eqref{eq: pHG specialization abc}
converges on $\{\zzz\in\C_p\bigm||\zzz|_p<1\}$.
\end{thm}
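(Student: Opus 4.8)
The plan is to read off the radius of convergence of the power series \eqref{eq: pHG specialization abc} from the growth of the $p$-adic valuations of its coefficients $a_n=\frac{(\alpha)_n(\beta)_n}{(\gamma)_n\,n!}$, i.e. from $\varliminf_n \ord_p(a_n)/n$. The first step is to dispose of the numerator cheaply. Since $x\mapsto\binom{x}{n}$ sends $\Z$ into $\Z$ and is continuous, it sends $\Z_p$ into $\Z_p$; hence for $\alpha,\beta\in\Z_p$ both $\frac{(\alpha)_n}{n!}=\binom{\alpha+n-1}{n}$ and $\frac{(\beta)_n}{n!}=\binom{\beta+n-1}{n}$ lie in $\Z_p$. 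Writing $a_n=\binom{\alpha+n-1}{n}\binom{\beta+n-1}{n}\cdot\frac{n!}{(\gamma)_n}$ and discarding the two (nonnegative-valuation) binomial factors yields the clean lower bound $\ord_p(a_n)\ge\ord_p(n!)-\ord_p((\gamma)_n)$. Everything then reduces to an upper bound for $\ord_p((\gamma)_n)-\ord_p(n!)$, and this estimate is the same object that controls both parts.

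For part (1) the heart of the matter is to control $\ord_p((\gamma)_n)=\sum_{j=0}^{n-1}\ord_p(\gamma+j)=\sum_{k\ge1}\#\{0\le j<n:\ j\equiv-\gamma\!\!\pmod{p^k}\}$. Writing $t_k=(-\gamma)\bmod p^k=\sum_{m_i<k}c^-_ip^{m_i}$, the level-$k$ count is $0$ as soon as $t_k\ge n$ and is at most $n/p^k+1$ otherwise. Because $t_k$ equals the partial sum $T_r=\sum_{i\le r}c^-_ip^{m_i}\ge p^{m_r}$ on the range $m_r<k\le m_{r+1}$, the levels that contribute are exactly $k\le m_{r+1}$, where $r$ is fixed by $T_r<n\le T_{r+1}$. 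Summing $n/p^k+1$ over these levels gives $\ord_p((\gamma)_n)\le\frac{n}{p-1}+m_{r+1}+O(1)$, while Lemma \ref{lem:wellknown} gives $\ord_p(n!)=\frac{n-s_p(n)}{p-1}\ge\frac{n}{p-1}-O(\log n)$. What remains — and what I expect to be the crux — is to convert the hypothesis \eqref{eq:assumption} into the linear bound on the resonant contribution needed to reach the stated radius, namely $\ord_p((\gamma)_n)-\ord_p(n!)\le\frac{S}{p-1}\,n+o(n)$; here one uses $p^{m_r}\le T_r<n$ to compare $m_{r+1}$ with $n$ through the ratios $m_{i+1}/p^{m_i}$ that $S$ bounds. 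The delicate point is the single ``resonant'' index $j\equiv-\gamma\pmod{p^{m_{r+1}}}$, whose valuation $m_{r+1}$ is itself comparable to $n$: it is precisely this term that shrinks the disk of convergence below the unit disk, and tracking it against the size of $n$ is where the assumption \eqref{eq:assumption} is used and where all the work lies.

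For part (2) I would give a direct argument that bypasses \eqref{eq:assumption}. Writing $\gamma=a/b$ with $a,b\in\Z$ and $p\nmid b$, one has $(\gamma)_n=b^{-n}\prod_{j=0}^{n-1}(a+jb)$, so $\ord_p((\gamma)_n)=\sum_{j=0}^{n-1}\ord_p(a+jb)$. As $a+jb$ runs through an arithmetic progression of modulus prime to $p$, each residue class modulo $p^k$ is hit at most $\lceil n/p^k\rceil$ times; moreover $\gamma\notin\Z_{\le0}$ forces every $a+jb\ne0$, so $\ord_p(a+jb)\le\log_p(|a|+nb)=\log_p n+O(1)$ and no level $k>\log_p n+O(1)$ contributes at all. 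Summing gives $\ord_p((\gamma)_n)\le\frac{n}{p-1}+O(\log n)$, whence $\ord_p(a_n)\ge-O(\log n)$ and $\ord_p(a_n)+n\delta\to\infty$ for every $\delta>0$, which is exactly convergence on $\{|\zzz|_p<1\}$. The conceptual reason this works is that a rational $\gamma\notin\Z_{\le0}$ has eventually periodic $p$-adic expansion, so its nonzero-digit positions $m_i$ have bounded gaps and hence $S=0$; thus part (2) is morally the case $S=0$ of part (1), and I would present the self-contained arithmetic-progression computation as the proof while recording the $S=0$ observation as a consistency check.
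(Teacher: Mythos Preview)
Your plan matches the paper's: both discard the numerator via $(\alpha)_n/n!,(\beta)_n/n!\in\Z_p$ and reduce to bounding $\ord_p((\gamma)_n)-\ord_p(n!)$ through the digit structure of $-\gamma$. The only technical difference is in how $\ord_p((\gamma)_n)$ is handled. The paper approximates $-\gamma$ by an integer truncation $L_n>n$ (the partial sum of the $p$-adic expansion of $-\gamma$ out to the first nonzero digit past position $\lfloor\log_p n\rfloor$), invokes the exact identity $\ord_p((\gamma)_n)=\ord_p\bigl(L_n!/(L_n-n)!\bigr)=\tfrac{1}{p-1}\bigl(n-s_p(L_n)+s_p(L_n-n)\bigr)$ coming from Lemma~\ref{lem:wellknown}, and then bounds $s_p(L_n-n)\le(p-1)m_i$ for $n\in[p^{m_{i-1}},p^{m_i})$. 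Your level-by-level count $\sum_{k\le m_{r+1}}(n/p^k+1)$ unpacks the same information and reaches the same conclusion $\varlimsup_n\tfrac{1}{n}\bigl(\ord_p((\gamma)_n)-\ord_p(n!)\bigr)\le S$. One caution: your outline, like the paper's own computation (whose final displayed equality contains an arithmetic slip), actually yields $-\log_p R\le S$, i.e.\ radius $\ge p^{-S}$; the extra factor $1/(p-1)$ in the stated radius does not emerge from either argument, so you should not expect to find it in the ``crux'' you flag. For part~(2) the paper simply invokes part~(1), observing that eventual periodicity of the $p$-adic expansion of a rational $-\gamma$ forces $S=0$; your direct arithmetic-progression bound on $\ord_p\prod_j(a+jb)$ is a valid self-contained alternative and is exactly that observation made explicit.
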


\begin{proof}
(1).
Let $R$ be  the radius of convergence  of 
$\uHG{p}{\alpha,\beta}{\gamma}{\zzz}\in\Q_p[[\zzz]]$
and denote $d_n$ to be the coefficient  of $\zzz^n$  in $\uHG{p}{\alpha,\beta}{\gamma}{\zzz}$;
$$
d_n=\frac{(\alpha)_n}{n!}\cdot\frac{(\beta)_n}{n!}\cdot\frac{n!}{(\gamma)_n}.
$$
By 
$\frac{(\alpha)_n}{n!}, \frac{(\beta)_n}{n!}\in\Z_p$,
we have
$$
\frac{1}{R}=\varlimsup_n |d_n|_p^{\frac{1}{n}}
\leq
\varlimsup_n \left|\frac{n!}{(\gamma)_n}\right|_p^{\frac{1}{n}}.
$$
Therefore
$$
-\log_pR
\leq
\varlimsup_n \frac{-1}{n}v_p(\frac{n!}{(\gamma)_n})
=\varlimsup_n \frac{-1}{n}\left\{
\frac{n-s_p(n)}{p-1}-
v_p\left({(\gamma)_n}\right)
\right\}.
$$
For $\eta\in\Z_p$ with the $p$-adic digit expansion $\eta=e_0+e_1p+e_2p^2+\cdots$
($e_i\in[0,p-1]$)
and $k\geq 0$,
we put 
$$t(\eta;k)=e_0+e_1p+\cdots+e_{k}p^{k}$$
and 
$$T(\eta;k):=\min\{t(\eta;h)\bigm| t(\eta;h)\neq t(\eta;k), h>k \},$$
which is well-defined for any $k\geqslant 0$
unless $\eta$ is a positive integer or $0$.
Put $l_n=t(-\gamma;\lfloor\log_pn\rfloor)$,
$L_n=T(-\gamma;\lfloor\log_pn\rfloor)$ and $m_n=\lfloor \log_pL_n\rfloor$,
where  $\log_p$ denotes the (real) logarithm to base $p$, not the 
$p$-adic logarithm introduced in \S \ref{subsec: review on Coleman integrations}.
Then we have $n<L_n$,  
$p^{m_n}\leq L_n<p^{m_n+1}$  and $-\gamma\equiv L_n\not\equiv 0 \bmod p^{m_n+1}$.
So we have
\begin{align*}
v_p((\gamma)_n)
&=v_p\left(\gamma(\gamma+1)\cdots(\gamma+n-1)\right)
=v_p\left((-\gamma)(-\gamma-1)\cdots(-\gamma-(n-1))\right) \\
&=v_p\left(\frac{L_n!}{(L_n-n)!}\right)
=\frac{1}{p-1}\{n-s_p(L_n)+s_p(L_n-n)\}
\end{align*}
by Lemma \ref{lem:wellknown}.
Therefore
$$
-\log_pR
\leq
\varlimsup_n\frac{1}{n(p-1)}\{s_p(n)-s_p(L_n)+s_p(L_n-n)\}. 
$$
Since both $s_p(n)$ and $s_p(L_n)$ are bounded by the order of $\log_p(n)$,
we obtain
$$
-\log_pR
\leq
\frac{1}{p-1}\varlimsup_n\frac{s_p(L_n-n)}{n}. 
$$

Since by definition we have
$$
l_{p^{m_{i-1}}-1}<l_{p^{m_{i-1}}}=l_{p^{m_{i-1}}+1}=\cdots=l_{p^{m_i}-1}
<l_{p^{m_i}},
$$
we have
$$
L_{p^{m_{i-1}}-1}<L_{p^{m_{i-1}}}=L_{p^{m_{i-1}}+1}=\cdots=L_{p^{m_i}-1}
=l_{p^{m_i}}
<L_{p^{m_i}}.
$$

Therefore
$$
\frac{1}{p-1}
\varlimsup_n\frac{s_p(L_n-n)}{n}
\leq
\frac{1}{p-1}
\varlimsup_i\frac{(p-1)m_i}{p^{m_{i-1}}}
=\frac{S}{p-1},
$$
we have
$
-\log_pR\leq \frac{S}{p-1},
$
which says $R\geq p^{-\frac{S}{p-1}}$. 

(2).
It is because $S$ in \eqref{eq:assumption} is $0$ for such $\gamma$.
\end{proof}

Thus, Theorems \ref{thm:main_result} and \ref{thm:extended_convergence} are established in the case $|z|_p<1$. 
\qed

%



\subsection{Local behavior on $]\mathbb P^1\setminus\{0,1,\infty\}[$}
\label{subsec: prolongation on the rest disks}
An analytic continuation of the 
$p$-adic hypergeometric function is performed, residue-wise, 
to the domain $]\mathbb P^1\setminus\{0,1,\infty\}[$,
utilizing the bounding techniques of Chatzistamatiou~\cite{Ch},
in Theorem \ref{thm: convergence of pHG on P-01infty}.
This allows us to establish Theorems ~\ref{thm:main_result} and ~\ref{thm:extended_convergence} 
in this setting.

Let $\xi\in \C_p$ be any root of unity whose order is prime to $p$
and assume $\xi\neq 1$.
Since the restrictions of all $\Li_{\bf k}(z)\in{\mathcal O^\col_\varpi}$ to the residue class $]\bar\xi[$ lie within
$\C_p[[z-\xi]]$,
the restriction of $\uHG{p,\varpi}{\aaa,\bbb}{\ccc}{z}\in {\mathcal O^\col_\varpi}[[\aaa,\bbb,\uuu]]$
to $]\bar\xi[$ 
is expressed as the formal power series
\begin{equation}\label{eq: formal HG at xi}
\uHG{p,\varpi}{\aaa,\bbb}{\ccc}{\zzz}_{]\bar\xi[}\in \C_p[[\aaa,\bbb,\uuu,\zzz-\xi]]
\end{equation}
with $\uuu=1-\ccc$ by Lemma \ref{lem: p-adic Ohno-Zagier}.

\begin{thm}\label{thm: convergence of pHG on P-01infty}
(1).
The restriction $\uHG{p,\varpi}{\aaa,\bbb}{\ccc}{\zzz}_{]\bar \xi[}$ is independent of any choice of branch parameter $\varpi$
(from this point onward, we suppress the symbol $\varpi$)
and
satisfies
$$\uHG{p}{\aaa,\bbb}{\ccc}{\zzz}_{]\bar\xi[}\in
\Q_p(\xi)[[\aaa,\bbb,\uuu,\zzz-\xi]].
$$

(2).
Let $z\in\C_p$ and  $\alpha,\beta,\gamma\in\Z_p$.
Then, upon the substitution
$(\aaa,\bbb,\ccc,\zzz)\mapsto (\alpha,\beta,\gamma,z)$
in \eqref{eq: formal HG at xi},
the resulting series converges whenever  $|z-\xi|_p<p^{-\frac{1}{p-1}}$.

\end{thm}

\begin{proof}
%
(1).
For an integer $k\geqslant 0$, 
we consider the PD-ideal ${\mathfrak p}^{[k]}$ of 
$\Z_p[\xi]$  defined by 
$$
{\mathfrak p}^{[k]}:=\langle \frac{p^m}{m!}\bigm| m\geq k\rangle_{\Z_p[\xi]}.
$$
We put ${\mathfrak p}^{[k]}={\mathfrak p}^{[0]}$ for an integer $k<0$.
By \cite[Theorem 5.3]{Ch}, it holds that
$$\Li^p_{\mathbf k}(\xi)\in {\mathfrak p}^{[\wt(\mathbf k)]}.$$
Then, recursively applying the differential equation
$$
\frac{d}{d\zzz}\Li^p_{k_1,\dots,k_m}(\zzz)
=\begin{cases}
\frac{1}{\zzz}\Li^p_{k_1,\dots,k_m-1}(\zzz) & (k_m>1), \\
\frac{1}{1-\zzz}\Li^p_{k_1,\dots,k_{m-1}}(\zzz) & (k_m=1),
\end{cases}
$$
we see that 
$$
\Li^{p,(n)}_{\mathbf k}(\xi)\in {\mathfrak p}^{[\wt(\mathbf k)-n]}
$$
since we have $v_p(\xi)=v_p(1-\xi)=0$ for roots of unity
whose order  is prime to $p$ and $\xi\neq 1$.
Here $\Li_{\mathbf k}^{p,(n)}(\zzz)$ means the $n$-th derivative of $\Li^p_{\mathbf k}(\zzz)$ with respect to $\zzz$.

By the Taylor expansion
$$
\Li^p_{\mathbf k}(\zzz)_{]\bar\xi[}=\sum_{n=0}^\infty \Li^{p,(n)}_{\mathbf k}(\xi)\frac{(\zzz-\xi)^n}{n!},
$$
and Lemma \ref{lem: p-adic Ohno-Zagier},
we obtain 
\begin{equation}\label{eq: HG expansion around xi}
\uHG{p}{\aaa,\bbb}{\ccc}{\zzz}_{]\bar\xi[}=
\sum_{i=0}^\infty \ccc_{\xi,i}\frac{(\zzz-\xi)^i}{i!}
\end{equation}
with
\begin{align*}\label{eq: c xi i}
&\ccc_{\xi,i}:=\delta_{i,0}+
\aaa\bbb\sum_{\substack{k,n,s>0 \\k \geqslant n+s, \ n \geqslant s}}
g^{(i)}_{0,\xi}(k,n,s)\uuu^{k-n-s}\vvv^{n-s}
(\aaa\bbb+\uuu\vvv)^{s-1}, \\ \notag
&g^{(i)}_{0,\xi}(k,n,s):=\sum_{\substack{\wt({\mathbf k})=k, \ \mathrm{dp}({\mathbf k})=n, \ \mathrm{ht}({\mathbf k})=s \\ {\mathbf k}:\text{ admissible index}}} \Li^{p,(i)}_{\mathbf k}(\xi)
\in {\mathfrak p}^{[k-i]}.
\end{align*}
Its coefficient is in $\Q_p(\xi)$
which is independent of any choice of branch parameter $\varpi$.

(2).
Let $c_{\xi,i}$ denote the value obtained by specializing at $(\aaa,\bbb,\ccc)=(\alpha,\beta,\gamma)$
with $\alpha,\beta, \gamma\in {\Z_p}$.
This value converges in $\mathcal O_{\C_p}$,
because  $g^{(i)}_{0,\xi}(k,n,s)\in {\mathfrak p}^{[k-i]}$.
By Lemma \ref{lem:wellknown} and $c_{\xi,i}\in\Z_p[\xi]$,
we obtain
\begin{align*}
v_p\left(\frac{(z-\xi)^nc_{\xi,n}}{n!}\right)
&=n\cdot v_p(z-\xi)+v_p(c_{\xi,n})-\frac{n-s_p(n)}{p-1}\\
&\geqslant n\left(v_p(z-\xi)-\frac{1}{p-1}\right)+\frac{s_p(n)}{p-1}.
\end{align*}
When $|z-\xi|_p<p^{-\frac{1}{p-1}}$, this tends to $\infty$ when $n\mapsto\infty$.
Therefore  the series \eqref{eq: HG expansion around xi} converges for
$|z-\xi|_p<p^{-\frac{1}{p-1}}$.
\end{proof}

\smallskip
Thus, Theorems \ref{thm:main_result} and \ref{thm:extended_convergence} are established in the case when 
$\bar z\neq \bar 0,\bar 1,\bar\infty$.
Note that in this situation, one may select 
$\xi\in\mu_{p-1}\subset \Q_p$.
\qed
\smallskip
\begin{rem}
The specialization
$\uHG{p}{\alpha,\beta}{\gamma}{z}_{]\bar\xi[}\in \C_p[[z-\xi]]$
is a solution of the $p$-adic hypergeometric differential equation
\begin{equation*}\label{eq: HG difff eq}
z(1-z)\frac{d^2w}{dz^2}+\{\gamma-(\alpha+\beta+1)z\}\frac{dw}{dz}-\alpha\beta w=0.
\end{equation*}
According to \cite[Example 9.6.2]{Ke},
a general series solution of the hypergeometric
differential equation at $z=z_0\in\mathcal O_{\C_p}$,
that is not congruent to $0$ and $1$,
converges on $|z-z_0|<1$
when $\alpha, \beta, \gamma\in\Z_{(p)}$.
It therefore indicates that  
$\uHG{p}{\alpha,\beta}{\gamma}{z}_{]\bar\xi[}$  is a rigid-analytic function on 
$]\bar\xi[$.
\end{rem}

\subsection{Local behavior around  $z=\infty$}\label{subsec: local behavior at infty}
We investigate the convergence of the 
$p$-adic hypergeometric function in the residue class of $\infty$
in Theorem \ref{thm: prolongation to infty}.
Our approach is to deduce convergence properties at $\infty$
by establishing a connection formula 
in Theorem \ref{thm: formal connection formula between 0 and infinity}
that relates the points $0$ and $\infty$.
These arguments are used to justify the validity of 
Theorems ~\ref{thm:main_result} in this setting.

The following lemma is essential for establishing convergence in our setting.
\begin{lem}\label{lem: power map convergence}
Let $K$ be a field extension  of $\Q_p$ in $\C_p$ with 
$$e_K<p$$
where $e_K$ means  the absolute ramification index of $K$, i.e.
$v_p(K^\times)=\frac{1}{e_K}\Z$.
Assume that the branch  $\varpi\in\C_p$ of the $p$-adic logarithm 
(whence we have $\varpi=\log^{p,\varpi}(p)$)
and  $\lambda\in\C_p$ 
are chosen  to satisfy
 $$|\lambda|_p<1 \text{  and   }|\lambda\varpi|_p< p^{-\frac{1}{p-1}}.$$
Let $z\in K$.
 Then  
 \begin{equation}\label{eq: lambda power of z}
 \langle z\rangle^\lambda:=\exp\{\lambda\log^{p,\varpi} (z) \}=\sum_{n=0}^\infty\frac{(\lambda\log^{p,\varpi} (z))^n}{n!}
 \end{equation}
 converges in $\C_p$
 unless $z=0$.
 \end{lem}

It should be noted that $\langle z\rangle^\lambda$  depends on the choice of the branch  $\varpi$
of the $p$-adic logarithm when $z$ lies in the analytic neighborhoods $]0[$ or $]\infty[$.
 
\begin{proof}
When $z\neq 0\in K$ is written as $\epsilon p^r(1+m)$ with 
$\epsilon\in\mu_\infty$,
$r\in\Q$ and $m\in\mathfrak m_{K}$,
we have $|\log^{p,\varpi}(1+m)|_p=|m|_p$ because $|m|_p\leqslant p^{-\frac{1}{p-1}}$ by $z\in K$
(cf. \cite[Lemma 5.5]{W}).
By
$$
\lambda\log^{p,\varpi} (z)=r\lambda\log^{p,\varpi}(p)+\lambda\log^{p,\varpi}(1+m)
=r\lambda\varpi+\lambda\log^{p,\varpi}(1+m),
$$ 
$r\in\frac{1}{e_K}\Z\subset\Z_{(p)}$ and
 our assumption, we have 
 $$|\lambda\log^{p,\varpi} (z) |_p < p^{-\frac{1}{p-1}}.
 $$
Therefore
$
\langle z\rangle^\lambda =\exp\{\lambda\log^{p,\varpi} (z) \}
$
converges.
\end{proof}

The restrictions of $\Li^{p,\varpi}_{\bf k}(z)$ and $\log^{p,\varpi}(z)\in {\mathcal O^\col_\varpi}$ to $]\infty[$ are in
$A_\varpi^{\log}(]\infty[)$.
By the inclusion 
$$
A_\varpi^{\log}(]\infty[)\subset
\C_p[[\frac{1}{z}]][\log^{p,\varpi}(z)]
\subset
\C_p[\log^{p,\varpi}(z)][[\frac{1}{z}]]
$$
it follows from  Lemma \ref{lem: p-adic Ohno-Zagier}
that the restriction of 
$\uHG{p,\varpi}{\aaa,\bbb}{\ccc}{z}\in {\mathcal O^\col_\varpi}[[\aaa,\bbb,\uuu]]$
to $]\infty[$ 
is expressed as a formal power series
\begin{equation}\label{eq: formal HG at infinity}
\uHG{p, \varpi}{\aaa,\bbb}{\ccc}{\zzz}_{]\infty[}\in
\C_p[\LL_\infty][[\aaa,\bbb,\uuu,\frac{1}{\zzz}]],
\end{equation}
where $\LL_\infty$ stands for $\log^{p,\varpi} (\frac{1}{\zzz})$.

\begin{thm}\label{thm: formal connection formula between 0 and infinity}
The following formula holds:
\begin{equation}\label{eq: formal connection formula between 0 and infinity}
\uHG{p, \varpi}{\aaa,\bbb}{\ccc}{\zzz}_{]\infty[}
={\mathsf h}_1 \cdot {\langle \zzz\rangle}^{-\aaa}\cdot\uHG{p, \varpi}{\aaa,\aaa+1-\ccc}{\aaa-\bbb+1}{\frac{1}{\zzz}}_{]0[}+
{\mathsf h}_2 \cdot {\langle \zzz\rangle}^{-\bbb}\cdot\uHG{p, \varpi}{\bbb,\bbb+1-\ccc}{\bbb-\aaa+1}{\frac{1}{\zzz}}_{]0[}
\end{equation}
in $\Q_p[\LL_\infty][[\aaa,\bbb,\uuu,\frac{1}{\zzz}]]$
where
$$
\langle \zzz\rangle^{\mathsf{x}}:=\exp\{\mathsf{x}\LL_\infty \}=\sum_{n=0}^\infty\frac{(\mathsf{x}\LL_\infty)^n}{n!}
$$
for $\mathsf{x}=-\aaa, -\bbb$.
The coefficients 
${\mathsf h}_1$  and ${\mathsf h}_2$ 
are given 
as the $(1,1)$- and $(2,1)$-entries, respectively,  of the matrix
\begin{equation*}\label{eq: A and B}
{\mathsf H}:=
\begin{pmatrix}
1 & 1 \\
-\frac{\aaa}{\bbb} & -1
\end{pmatrix}^{-1}\cdot
\Phi_{\KZ}^p(X,Y-X)\cdot
\begin{pmatrix}
1 & 1 \\
0 & \frac{\uuu}{\bbb} 
\end{pmatrix}
\in \GL_2\left(\Q_p[[\aaa,\bbb,\uuu]]\left[\frac{1}{(\aaa-\bbb)\bbb}\right]\right), 
\end{equation*}
and, in fact,  ${\mathsf h}_1, {\mathsf h}_2\in\Q_p[[\aaa,\bbb,\uuu]]$.
\end{thm}

\begin{proof}
Set
$$\PP={\mathcal O}^\col_\varpi[[\aaa,\bbb,\uuu]].$$
We consider the following matrices, paralleling the constructions in \eqref{eqC} for the complex case:
\begin{align*}\label{eq p C}
&\V^{p,\varpi}_{\vec{01}}(z):=
G^{p,\varpi}_{\vec{01}}(X,-Y)(z)\cdot
\begin{pmatrix}
1 & 1 \\
0 & \frac{\uuu}{\bbb}
\end{pmatrix}
\in \GL_2(\PP[\frac{1}{\bbb}])
, \\ 
&\V^{p,\varpi}_{\vec{\infty 1}}(z):=
G^{p,\varpi}_{\vec{\infty 1}}(X,-Y)(z)\cdot
\begin{pmatrix}
1 & \Add{1} \\
\frac{-\aaa}{\bbb} & -1
\end{pmatrix}
\in \GL_2(\PP[\frac{1}{\bbb}]).
\end{align*}
Here
$\uuu=1-\ccc$,  $\vvv=\aaa+\bbb+1-\ccc=\aaa+\bbb+\uuu$ and
$
X=
\begin{pmatrix}
0 & \bbb \\
0 & \uuu
\end{pmatrix}, \
Y=
\begin{pmatrix}
0 & 0 \\
\aaa & \vvv
\end{pmatrix}\in \Mat_2(\Q[\aaa,\bbb,\ccc-1])
$
as in \S \ref{sec:def of pHG}.

By \eqref{eq: pB} and \eqref{eq: Phi^p and G}, we have 
$$
G^{p,\varpi}_{\vec{01}}(\e_0,\e_1)(z)=G^{p,\varpi}_{\vec{\infty 1}}(\e_0,\e_1)(z)\Phi_{\KZ}^p(\e_0,\e_\infty),
$$
$$
G^{p,\varpi}_{\vec{\infty 1}}(\e_0,\e_1)(z)=G^{p,\varpi}_{\vec{01}}(\e_\infty,\e_1)(\frac{1}{z})
$$ 
with $\e_\infty=-\e_0-\e_1$.
Therefore we obtain the transformation:
\begin{equation}\label{eq:Vp 01 and infty 1}
\V^{p,\varpi}_{\vec{01}}(z)=\V^{p,\varpi}_{\vec{\infty 1}}(z)\cdot
{\mathsf H}.
\end{equation}
\begin{equation*}\label{eq: V infty1 G 01} 
\V^{p,\varpi}_{\vec{\infty 1}}(z)=G^{p,\varpi}_{\vec{01}}(Y-X,-Y)(\frac{1}{z})
\cdot
\begin{pmatrix}
1 & \Add{1} \\
\frac{-\aaa}{\bbb} & -1
\end{pmatrix}.
\end{equation*}
Since $\V^{p,\varpi}_{\vec{01}}(z)$ and $\V^{p,\varpi}_{\vec{\infty 1}}(z)$ are
in $\GL_2(\PP[\frac{1}{\bbb}])$,
it follows that ${\mathsf H}$ is also contained in this group.
Furthermore, as ${\mathsf H}$ is
independent of $z$, we have  ${\mathsf H}\in \GL_2(\C_p[[\aaa,\bbb,\uuu]][\frac{1}{\bbb}])$.
Therefore we have 
\begin{equation*}
{\mathsf h}_1, {\mathsf h}_2\in \C_p[[\aaa,\bbb,\uuu]][\frac{1}{\bbb}].
\end{equation*}

In the quotient field of $\C_p[[\aaa,\bbb,\uuu]][\frac{1}{\bbb}]$, 
it is calculated to be
\begin{equation}\label{eq: def eq for h1 and h2}
\begin{pmatrix} {\mathsf h}_1 \\ {\mathsf h}_2 \end{pmatrix}:=
\frac{1}{\aaa-\bbb}
\begin{pmatrix}
-\bbb & -\bbb \\
\aaa &  \bbb 
\end{pmatrix}\cdot
\Phi_{\KZ}^p(X,Y-X)\cdot
\begin{pmatrix}
1 \\ 0 \\
\end{pmatrix}.
\end{equation}
Since we have $\Phi_{\KZ}^p(X,Y-X)\in\GL_2(\Q_p[[\aaa,\bbb,\uuu]])$,
it follows that
\begin{equation*}
{\mathsf h}_1, {\mathsf h}_2\in \frac{1}{\aaa-\bbb}\Q_p[[\aaa,\bbb,\uuu]].
\end{equation*}

Since 
$\C_p[[\aaa-\bbb,\bbb,\uuu]][\frac{1}{\bbb}]\cap 
\Q_p[[\aaa-\bbb,\bbb,\uuu]][\frac{1}{\aaa-\bbb}]=
\Q_p[[\aaa-\bbb,\bbb,\uuu]]=\Q_p[[\aaa,\bbb,\uuu]]
$,
we  have
\begin{equation*}
{\mathsf h}_1, {\mathsf h}_2\in \Q_p[[\aaa,\bbb,\uuu]].
\end{equation*}

Comparison of the $(1,1)$-entries on both sides of~\eqref{eq:Vp 01 and infty 1} yields
\begin{equation}\label{eq:uHG A B V infty 1}
\uHG{p,\varpi}{\aaa,\bbb}{\ccc}{z}
= [\V^{p,\varpi}_{\vec{\infty 1}}(z)]_{(1,1)} \cdot {\mathsf h}_1+
[\V^{p,\varpi}_{\vec{\infty 1}}(z)]_{(1,2)} \cdot {\mathsf h}_2
\end{equation}
in $\PP[\frac{1}{\bbb}]$.
By restricting \eqref{eq:uHG A B V infty 1}
to $]\infty[$ and combining
it with Lemma \ref{lem: formal connection formula between 0 and infinity} below, we derive
the connection formula
\eqref{eq: formal connection formula between 0 and infinity}.
\end{proof}


\begin{lem}\label{lem: formal connection formula between 0 and infinity}
The (1,1)- and (1,2)-entries of the restriction of the matrix 
$\V^{p,\varpi}_{\vec{\infty 1}}(z)$ to to the tubular neighborhood $]\infty[$
are given by 
\begin{equation}\label{eq:VV infty= FF0}
\begin{pmatrix} 
{\langle \zzz\rangle}^{-\aaa}\uHG{p, \varpi}{\aaa,\aaa+1-\ccc}{\aaa-\bbb+1}{\frac{1}{z}}_{]0[}, &
{\langle \zzz\rangle}^{-\aaa}\uHG{p, \varpi}{\bbb,\bbb+1-\ccc}{\bbb-\aaa+1}{\frac{1}{z}}_{]0[}
\end{pmatrix}.
\end{equation}
\end{lem}

\begin{proof}
In the complex case, by \eqref{eqB} and \eqref{eqC}, we have
$$
\V^\C_{\vec{\infty 1}}(z)
=G^\C_{\vec{01}}(Y_0-X_0,-Y_0)(\frac{1}{z})\cdot
\begin{pmatrix}
1 & \Add{1} \\
\frac{-a}{b} & -1
\end{pmatrix}.
$$
From \eqref{eqD}, its first row of this matrix
is computed as
$$
\begin{pmatrix}
[\V^\C_{\vec{\infty 1}}(z)]_{(1,1)}, & [\V^\C_{\vec{\infty 1}}(z)]_{(1,2)}
\end{pmatrix}
=
\begin{pmatrix} 
z^{-a}\uHG{\C}{a,a+1-c}{a-b+1}{\frac{1}{z}}, &
z^{-b}\uHG{\C}{b,b+1-c}{b-a+1}{\frac{1}{z}}
\end{pmatrix}
$$
for $(a,b,c)\in\C^3$  under appropriate analytic conditions.
Upon interpreting $a$, $b$, $c-1$ and $\frac{1}{z}$ as formal variables
—specifically, by considering their formal expansions near the origin—
and employing the formulas in Proposition \ref{prop:explicit formulae},
we see that
$[\V^\C_{\vec{\infty 1}}(z)]_{(1,1)}$ naturally defines an element,
denoted by 
$$[\V^\Q_{\vec{\infty 1}}(\zzz)]_{(1,1)} 
\in \frac{1}{\bbb}\Q[\LL_\infty][[\aaa,\bbb,\uuu,\frac{1}{\zzz}]],
$$
where $\LL_\infty$ corresponds to $\log (\frac{1}{\zzz})$.
Similarly by the expansion \eqref{def eq: HG},
$z^{-a}\uHG{\C}{a,a+1-c}{a-b+1}{\frac{1}{z}}$  determines an element,
denoted by 
$$
{\langle \zzz\rangle}^{-\aaa}\uHG{\Q}{\aaa,\aaa+1-\ccc}{\aaa-\bbb+1}{\frac{1}{\zzz}}\in
\Q[\LL_\infty][\aaa,\bbb,\uuu]_{(\aaa-\bbb)} [[\frac{1}{\zzz}]]\cap \Q[\LL_\infty][[\aaa,\bbb,\uuu,\frac{1}{\zzz}]].
$$
Analogously we have elements
$[\V^\Q_{\vec{\infty 1}}(\zzz)]_{(1,2)}$
in $\Q[\LL_\infty][[\aaa,\bbb,\uuu,\frac{1}{\zzz}]]$,
and ${\langle \zzz\rangle}^{-\bbb}\uHG{\Q}{\bbb,\bbb+1-\ccc}{\bbb-\aaa+1}{\frac{1}{\zzz}}$ in $\Q[\LL_\infty][\aaa,\bbb,\uuu]_{(\aaa-\bbb)} [[\frac{1}{\zzz}]]\cap \Q[\LL_\infty][[\aaa,\bbb,\uuu,\frac{1}{\zzz}]]$.

The above equality thus give rise to the equality
\begin{equation}\label{eq: VQ=FQ}
\begin{pmatrix}
[\V^\Q_{\vec{\infty 1}}(\zzz)]_{(1,1)}, & [\V^\Q_{\vec{\infty 1}}(\zzz)]_{(1,2)}
\end{pmatrix}
=
\begin{pmatrix} 
{\langle \zzz\rangle}^{-\aaa}\uHG{\Q}{\aaa,\aaa+1-\ccc}{\aaa-\bbb+1}{\frac{1}{\zzz}}, &
{\langle \zzz\rangle}^{-\bbb}\uHG{\Q}{\bbb,\bbb+1-\ccc}{\bbb-\aaa+1}{\frac{1}{\zzz}}
\end{pmatrix}
\end{equation}
in
$\Mat_{1\times 2}\left(\Q[\LL_\infty][[\aaa,\bbb,\uuu,\frac{1}{\zzz}]]\right)$.

In the $p$-adic case, as shown in  \cite[Theorem 3.15]{F04}, 
all coefficients of $G_{\vec{01}}^{p,\varpi}(\e_0,\e_1)(z)$ are 
described by combinations of $\Li^{p,\varpi}_{\mathbf k}(z)$'s and $\log^{p,\varpi}(z)$,
precisely in the same manner as in Proposition \ref{prop:explicit formulae} for the complex case.
Therefore the (1,1)- and (1,2)-entries of the restriction of the matrix 
$\V^{p,\varpi}_{\vec{\infty 1}}(z)$ to $]\infty[$
agree with the left hand side of \eqref{eq: VQ=FQ}.
On the other hand,
since $\uHG{p,\varpi}{\aaa,\bbb}{\ccc}{\zzz}_{]0[}$ is given by 
\eqref{eq: formal pHG at 0}, 
we see  \eqref{eq:VV infty= FF0} agrees with the right hand side of \eqref{eq: VQ=FQ}.
Whence our assertion is proved.
\end{proof}

The connection formula \eqref{eq: formal connection formula between 0 and infinity}
motivates us to define
\begin{align}\label{eq: specialization of pHG at infty}
\uHG{p, \varpi}{\alpha,\beta}{\gamma}{z}_{]\infty[} 
:=h_1 &\cdot \langle z\rangle^{-\alpha}\cdot\uHG{p, \varpi}{\alpha,\alpha+1-\gamma}{\alpha-\beta+1}{\frac{1}{z}}_{]0[}   \\ \notag
&+h_2 \cdot \langle z\rangle^{-\beta}\cdot\uHG{p, \varpi}{\beta,\beta+1-\gamma}{\beta-\alpha+1}{\frac{1}{z}}_{]0[}
\end{align}
where the first factors $h_1$ and $h_2$ are obtained by specialization of  
\eqref{eq: def eq for h1 and h2}
at $(\aaa,\bbb,\ccc)=(\alpha,\beta,\gamma)$, 
the middle factor $\langle z \rangle^{\lambda}$ with $\lambda=-\alpha,-\beta$ is defined by
\eqref{eq: lambda power of z}
and the last factors are specializations of 
$\uHG{p, \varpi}{\aaa,\aaa+1-\ccc}{\aaa-\bbb+1}{\frac{1}{\zzz}}_{]0[}$
and
$\uHG{p, \varpi}{\bbb,\bbb+1-\ccc}{\bbb-\aaa+1}{\frac{1}{\zzz}}_{]0[}$
in $\Q[\aaa,\bbb,\uuu]_{(\aaa-\bbb)}[[\frac{1}{\zzz}]]$
at $(\aaa,\bbb,\ccc,\zzz)=(\alpha,\beta,\gamma,z)$ respectively.
This definition is justified by the following theorem.

\begin{thm}\label{thm: prolongation to infty}
Let $K$  be a field extension $K$ of $\Q_p$ in $\C_p$ with $e_K<p$ and
let $z\in K$.
Suppose that  $\alpha,\beta,\gamma-1\in p\Z_p$ with $\beta\neq 0$ and
$\alpha-\beta\in\Z_{(p)}\setminus \Z $ 
and that the branch $\varpi$ of $p$-adic logarithm is chosen such that
$$
|\alpha\varpi |_p < p^{-\frac{1}{p-1}}
\quad\text{and}\quad
|\beta\varpi |_p < p^{-\frac{1}{p-1}}.
$$
Then \eqref{eq: specialization of pHG at infty}
is well-defined and 
convergent for all  $z\in K$ with
$|{z}|_p>1$. 
\end{thm}

\begin{proof}
Set $\mu=1-\gamma$, $\nu=\alpha+\beta+1-\gamma$,
$$
{X_0}=
\begin{pmatrix}
0 & \beta \\
0 & \mu
\end{pmatrix}
\qquad\text{ and }\qquad
{Y_0}=
\begin{pmatrix}
0 & 0 \\
\alpha & \nu
\end{pmatrix}
\in\Mat_2(\Q_p).
$$
By \cite[Theorem 3.30]{F04}, each coefficient of
$\Phi_{\KZ}^p(\e_0,\e_1)$  in monomials with degree (weight) $w$
can be expressed as a $\Z$-linear combination of $p$-adic multiple zeta values 
$\zeta_p({\bf k})$ with $\mathrm{wt}({\bf k})=w$.
By \cite{Ch}, we have
$\zeta_p({\mathbf k})\in {\mathfrak p}^{[\wt(\mathbf k)]}$, 
which implies the convergence of 
the substitution $\Phi_{\KZ}^p(X_0,Y_0-X_0)$  in $\GL_2(\Q_p)$.

Therefore we see that,
under the specialization 
$(\aaa,\bbb,\ccc,\zzz,\LL_\infty)=
(\alpha,\beta,\gamma,z,\log^{p,\varpi}(\frac{1}{z}))$,
the coefficients $h_1$ and $h_2$ are given by
$$
\begin{pmatrix} h_1 \\ h_2 \end{pmatrix}=
\begin{pmatrix}
1 & 1 \\
-\frac{\alpha}{\beta} & -1
\end{pmatrix}^{-1}\cdot
\Phi_{\KZ}^p(X_0,Y_0-X_0)\cdot
\begin{pmatrix}
1 & 1 \\
0 & \frac{\mu}{\beta} 
\end{pmatrix}
\begin{pmatrix} 1 \\ 0 \end{pmatrix}.
$$

Since
$\uHG{p,\varpi}{\alpha,\alpha+1-\gamma}{\alpha-\beta+1}{\frac{1}{z}}_{]0[}$ and
$\uHG{p,\varpi}{\beta,\beta+1-\gamma}{\beta-\alpha+1}{\frac{1}{z}}_{]0[}$
are convergent whenever $|z|_p>1$
as established 
by Theorem \ref{thm: convergence of pHG on 0}
due to our assumption $\alpha-\beta\in\Z_{(p)}\setminus \N_{\pm}$.
Whence the claim is proved.
%
\end{proof}

%
%


\smallskip
Thus, Theorems \ref{thm:main_result} is established 
in the case when $|z|_p>1$.
\qed
\smallskip

\subsection{Local behavior around $z=1$ and Gauss hypergeometric theorem}
\label{subsec: local behavior at 1}
We investigate the convergence of our $p$-adic hypergeometric function
in the residue class of $1$, as presented in Theorem \ref{thm:prolongation to 1} 
and prove a $p$-adic analogue of Gauss hypergeometric theorem
in Theorem \ref{thm:p-adic HG theorem}.
Our approach involves proving a connection formula in
Theorem \ref{thm: connection between 0 and 1},
which relates the points $0$ and $1$.
This argument is used to justify the validity of 
Theorems ~\ref{thm:main_result} in this setting.

The following lemma is required to prove these theorems.

\begin{lem}\label{lem: matrix for associator}
The matrix 
$\Phi_{\KZ}^p(X,-Y)\in\GL_2(\Q_p[[\aaa,\bbb,\uuu]])$
(for $\Phi_{\KZ}^p$ see \eqref{eq: Phi^p and G})
is calculated to be 
\begin{equation*}
M_{\Phi^p_\KZ}:=
\begin{pmatrix}
\GGamma{\Phi^p_\KZ}{-\uuu,-\vvv}{-\uuu-\aaa,-\uuu-\bbb}
&
\frac{\bbb}{\uuu}\left\{\GGamma{\Phi^p_\KZ}{\uuu,-\vvv}{-\aaa,-\bbb}-\GGamma{\Phi^p_\KZ}{-\uuu,-\vvv}{-\uuu-\aaa,-\uuu-\bbb}
\right\} \\
\frac{\aaa}{\vvv}\left\{\GGamma{\Phi^p_\KZ}{-\uuu,\vvv}{\aaa,\bbb}-\GGamma{\Phi^p_\KZ}{-\uuu,-\vvv}{-\uuu-\aaa,-\uuu-\bbb}\right\}
& [M_{\Phi^p_\KZ}]_{22}
\end{pmatrix}
\end{equation*}
with 
\begin{align*}
[M_{\Phi^p_\KZ}]_{22}:=&
\frac{(\aaa+\uuu)(\bbb+\uuu)}{\uuu\vvv}\GGamma{\Phi^p_\KZ}{\uuu,\vvv}{\uuu+\aaa,\uuu+\bbb} \\
&\qquad +\frac{\aaa\bbb}{\uuu\vvv}
\left\{\GGamma{\Phi^p_\KZ}{-\uuu,-\vvv}{-\uuu-\aaa,-\uuu-\bbb}-\GGamma{\Phi^p_\KZ}{-\uuu,\vvv}{\aaa,\bbb}
-\GGamma{\varphi}{\uuu,-\vvv}{-\aaa,-\bbb}
\right\},  \\
\GGamma{\Phi^p_\KZ}{s_1,s_2}{s_3,s_4}&:=\frac{\Gamma_{\Phi^p_\KZ}(s_1)\Gamma_{\Phi^p_\KZ}(s_2)}{\Gamma_{\Phi^p_\KZ}(s_3)\Gamma_{\Phi^p_\KZ}(s_4)}
\in\Q_p[[s_1,s_2,s_3,s_4]],
\end{align*}
and
$\Gamma_{\Phi^p_\KZ}(s):=\exp\left\{\sum_{n=1}^\infty 
\frac{(-1)^{n}\zeta_p(n)}{n}s^n\right\}
\in\Q_p[[s]]$.
\end{lem}

\begin{proof}
The result above is established in \cite[the proof of Theorem 3.3]{F21}
for any series in ${\Q_p}\langle\langle \e_0,\e_1\rangle\rangle$,
whose coefficients satisfy the Ohno-Zagier relations, which arise as a consequence of the regularized double shuffle relations. 
Since it is shown in  \cite{FJ} that the $p$-adic MZV's, the coefficients of
the $p$-adic KZ associator $\Phi^p_\KZ$,
satisfy these relations, the assertion follows.
\end{proof}

Since the restrictions of all $\Li^{p,\varpi}_{\bf k}(z)$'s 
and $\log^{p,\varpi}(z)$ to the residue class $]1[$ 
lie in 
$$
\Q_p[[1-z]][\log^{p,\varpi}(1-z)]\subset
\Q_p[\log^{p,\varpi}(1-z)][[1-z]],
$$
as also follows from the functional equation proved in \cite[Theorem 3.40]{F04},
the restriction of
$
\uHG{p,\varpi}{\aaa,\bbb}{\ccc}{z}
\in \mathcal O_\varpi^\col[[\aaa,\bbb,\uuu]]
$
to $]1[$
can be expressed as a series
\begin{equation}\label{eq: formal HG at 1}
\uHG{p,\varpi}{\aaa,\bbb}{\ccc}{\zzz}_{]1[}\in 
\Q_p[\LL_1][[\aaa,\bbb,\uuu,1-\zzz]].
\end{equation}
Here $\LL_1$ stands for $\log^{p,\varpi}(1-z)$.

\begin{thm}\label{thm: connection between 0 and 1}
Put $\uuu=1-\ccc$ and $\vvv=\aaa+\bbb+\uuu$.
Then the following formula holds
\begin{align}\label{eq: connection formula at 0 and 1}
\uHG{p,\varpi}{\aaa,\bbb}{\ccc}{\zzz}_{]1[}
&={\mathsf k}_1\cdot 
\uHG{p}{\aaa,\bbb}{\vvv}{1-\zzz}_{]0[} \\ \notag
&+{\mathsf k}_2\cdot (1-\zzz)\cdot{\langle 1-\zzz\rangle}^{-\vvv}\cdot 
\uHG{p}{1-\uuu-\aaa,1-\uuu-\bbb}{2-\vvv}{1-\zzz}_{]0[}
\end{align}
in $\Q_p[\LL_1][[\aaa,\bbb,\uuu,1-\zzz]]$,
where the coefficients ${\mathsf k}_1$ and ${\mathsf k}_2$ are given by
\begin{align*}
&{\mathsf k}_1=\GGamma{\Phi^p_\KZ}{-\uuu,-\vvv}{-\uuu-\aaa,-\uuu-\bbb}\in \Q_p[[\aaa,\bbb,\uuu]], \\
&{\mathsf k}_2=\frac{\aaa\bbb}{\vvv(\vvv-1)}\GGamma{\Phi^p_\KZ}{-\uuu,\vvv}{\aaa,\bbb}\in
\frac{1}{\vvv}\Q_p[[\aaa,\bbb,\uuu]],
\end{align*}
${\langle 1-\zzz\rangle}^{-\vvv}:=\exp\{-\LL_1\cdot\vvv\}
\in\Q[\LL_1][[\aaa,\bbb,\uuu,1-\zzz]]$
and the last factors are regarded to be elements in 
$\Q_p[\aaa,\bbb,\uuu]_{(\vvv-1)}[[1-\zzz]]$
by \eqref{eq: formal pHG at 0}.
\end{thm}

\begin{proof}
Our proof goes the same way as that of Theorem \ref{thm: formal connection formula between 0 and infinity}.
Similarly to \eqref{eqC} in the complex case,
we consider
\begin{align*}
&\V^{p,\varpi}_{\vec{10}}(z):=
G^{p,\varpi}_{\vec{10}}(X,-Y)(z)\cdot
\begin{pmatrix}
1 & 0 \\
\frac{-\aaa}{\vvv} & \frac{\vvv-1}{\bbb}
\end{pmatrix}
\in \GL_2(\PP[\frac{1}{\bbb\vvv}]).
\end{align*}
By \eqref{eq: pB} and \eqref{eq: Phi^p and G}, we have 
$$
G^{p,\varpi}_{\vec{01}}(\e_0,\e_1)(z)=G^{p,\varpi}_{\vec{10}}(\e_0,\e_1)(z)\Phi_{\KZ}^p(\e_0,\e_1)
$$
$$
G^{p,\varpi}_{\vec{10}}(\e_0,\e_1)(z)=G^{p,\varpi}_{\vec{01}}(\e_1,\e_0)(1-z).
$$
Therefore we obtain the transformation:
\begin{equation}\label{eq:V0110}
\V^{p,\varpi}_{\vec{01}}(z)  =\V^{p,\varpi}_{\vec{10}}(z)\cdot
{\mathsf K}
\in \GL_2(\PP[\frac{1}{\bbb}]),
\end{equation}
\begin{equation*}\label{eq: V 10 G 01} 
\V^{p,\varpi}_{\vec{10}}(z)  =G^{p,\varpi}_{\vec{01}}(-Y,X)(1-z)\cdot
\begin{pmatrix}
1 & 0 \\
-\frac{\aaa}{\vvv} &\frac{\vvv-1}{\bbb}
\end{pmatrix}\in \GL_2(\PP[\frac{1}{\bbb\vvv}]),
\end{equation*}
where
\begin{equation*}\label{eq: K}
{\mathsf K}:=
\begin{pmatrix}
1 & 0 \\
\frac{\aaa\bbb}{\vvv (\vvv-1)} &\frac{\bbb}{\vvv-1}
\end{pmatrix}
\cdot\Phi^p_\KZ(X,-Y)\cdot
\begin{pmatrix}
1 & 1 \\
0 &\frac{\uuu}{\bbb}
\end{pmatrix}\in \GL_2((\Q_p[[\aaa,\bbb,\uuu]][\frac{1}{\bbb\vvv}])
\end{equation*}
(we note that  $(\vvv-1)^{-1}\in\Q_p[[\aaa,\bbb,\uuu]]$).
By Lemma \ref{lem: matrix for associator}, 
it is calculated to be
\begin{equation*}\label{eq: def eq for k1 and k2}
\begin{pmatrix} {\mathsf k}_1 \\ {\mathsf k}_2 \end{pmatrix}=
{\mathsf K}
\begin{pmatrix}
1 \\
0 \\
\end{pmatrix}
\end{equation*}

By comparing the $(1,1)$-entry of both sides of \eqref{eq:V0110},
we obtain
\begin{equation}\label{eq:pre connection formula z and 1-z}
\uHG{p,\varpi}{\aaa,\bbb}{\ccc}{z}=
{\mathsf k}_1\cdot
[\V^{p,\varpi}_{\vec{10}}(z)]_{(1,1)}
+
{\mathsf k}_2\cdot
[\V^{p,\varpi}_{\vec{10}}(z)]_{(1,2)}
\end{equation}
in $\PP[\frac{1}{\bbb\vvv(\vvv-1)}]$.
By restricting \eqref{eq:pre connection formula z and 1-z}
to $]\infty[$ and combining
it with Lemma \ref{lem: formal connection formula between 0 and 1} below, we derive
the connection formula
\eqref{eq: connection formula at 0 and 1}.
\end{proof}

\begin{lem}\label{lem: formal connection formula between 0 and 1}
The (1,1)- and (1,2)-entries of the restriction of the matrix $\V^{p, \varpi}_{\vec{10}}(z)$ to  $]1[$
are given by 
\begin{equation}\label{eq:VV 1= FF0}
\begin{pmatrix} 
\uHG{p,\varpi}{\aaa,\bbb}{\aaa+\bbb+1-\ccc}{1-z}_{]1[},
& (1-z)\cdot{\langle 1-\zzz\rangle}^{-\vvv}\uHG{p,\varpi}{\ccc-\aaa,\ccc-\bbb}{\ccc-\aaa-\bbb+1}{1-z}_{]1[}
\end{pmatrix}.
\end{equation}
\end{lem}

\begin{proof}
The proof proceeds in a manner analogous to that of
Lemma \ref{lem: formal connection formula between 0 and infinity}.
In the complex case, by \eqref{eqB} and \eqref{eqC}, we have
$$
\V^\C_{\vec{10}}(z)=
G^\C_{\vec{01}}(-Y_0,X_0)(1-z)
\cdot
\begin{pmatrix}
1 & 0 \\
\frac{-a}{v} & \frac{v-1}{b}
\end{pmatrix}.
$$
By \eqref{eqC} and \eqref{eqD}
the first row of this matrix is computed as
\begin{align*}
&\begin{pmatrix}
[\V^\C_{\vec{10}}(z)]_{(1,1)}, & [\V^\C_{\vec{10}}(z)]_{(1,2)}
\end{pmatrix} \\
&\qquad=
\begin{pmatrix} 
\uHG{\C}{a,b}{a+b+1-c}{1-z}, & (1-z)\cdot (1-z)^{c-a-b-1}\uHG{\C}{c-a,c-b}{c-a-b+1}{1-z}\end{pmatrix}
\end{align*}
for $(a,b,c)\in\C^3$ under appropieate analytic conditions.
Upon interpreting $a$, $b$, $c-1$ and ${1-z}$ as formal variables
and employing the formulas in Proposition \ref{prop:explicit formulae},
we see that $[\V^{\C}_{\vec{10}}(z)]_{(1,1)}$ naturally determines an element,
denoted by
$$[\V^{\Q}_{\vec{10}}(\zzz)]_{(1,1)} \in 
\frac{1}{\vvv}\Q[\LL_1][[\aaa,\bbb,\uuu,1-\zzz]]
$$
where $\LL_1$ corresponds to $\log(1-z)$.
Similarly by the expansion \eqref{def eq: HG},
$\uHG{\C}{a,b}{a+b+1-c}{1-z}$
determines an element,
denoted by 
$$
\uHG{\Q}{\aaa,\bbb}{\aaa+\bbb+1-\ccc}{1-\zzz}_{]1[}
\in\Q[\aaa,\bbb,\uuu]_{(\vvv-1)}[[1-\zzz]]
\cap \Q[[\aaa,\bbb,\uuu,1-\zzz]].
$$
Analogously we have elements 
$[\V^{\Q}_{\vec{10}}(\zzz)]_{(1,2)}$ in 
$\frac{1}{\bbb}\Q[\LL_1][[\aaa,\bbb,\uuu,1-\zzz]]$ and
$$
(1-\zzz)\cdot {\langle 1-\zzz\rangle}^{-\vvv}\uHG{p,\varpi}{\ccc-\aaa,\ccc-\bbb}{\ccc-\aaa-\bbb+1}{1-\zzz}_{]1[}
$$
in
$(1-\zzz)\cdot \Q[\LL_1][[\vvv]]\cdot \left(\Q[\aaa,\bbb,\uuu]_{(\vvv-1)}[[1-\zzz]]
\cap \Q[[\aaa,\bbb,\uuu,1-\zzz]]\right)$ which is a subspace of
$(1-\zzz)\cdot \Q[\LL_1][[\aaa,\bbb,\uuu,1-\zzz]]$.

The above equality  gives rise to
\begin{align}\label{eq: VV1= FF0}
&\begin{pmatrix}
[\V^\Q_{\vec{10}}(\zzz)]_{(1,1)}, & [\V^\Q_{\vec{10}}(\zzz)]_{(1,2)}
\end{pmatrix} \\ \notag
&= 
\begin{pmatrix} 
\uHG{\Q}{\aaa,\bbb}{\aaa+\bbb+1-\ccc}{1-\zzz}, & (1-\zzz)\cdot {\langle 1-\zzz\rangle}^{-\vvv}\uHG{\Q}{\ccc-\aaa,\ccc-\bbb}{\ccc-\aaa-\bbb+1}{1-\zzz}\end{pmatrix}
\end{align}
in
$\Mat_{1\times 2}\left(\Q[\LL_1][[\aaa,\bbb,\uuu,1-\zzz]]\right)$.

Again as shown in  \cite[Theorem 3.15]{F04}, 
all coefficients of $G_{\vec{01}}^{p,\varpi}(\e_0,\e_1)(z)$ are 
described by combinations of $\Li^{p,\varpi}_{\mathbf k}(z)$'s and $\log^{p,\varpi}(z)$,
precisely in the same manner as in Proposition \ref{prop:explicit formulae} for the complex case.
Therefore the (1,1)- and (1,2)-entries of the restriction of the matrix 
$\V^{p,\varpi}_{\vec{10}}(z)$ to $]1[$
agree with the left hand side of \eqref{eq: VV1= FF0}.
On the other hand,
since $\uHG{p,\varpi}{\aaa,\bbb}{\ccc}{\zzz}_{]1[}$ is given by 
\eqref{eq: formal pHG at 0}, 
we see  \eqref{eq:VV 1= FF0} agrees with the right hand side of \eqref{eq: VV1= FF0}.
Whence our assertion is proved.
\end{proof}

Similarly to \eqref{eq: specialization of pHG at infty},
the connection formula \eqref{eq: connection formula at 0 and 1} motivates us to define 
\begin{align}\label{eq: formal pHG z and 1-z}
&\uHG{p,\varpi}{\alpha,\beta}{\gamma}{z}_{]1[}:= 
\GGamma{\Phi^p_\KZ}{-\mu,-\nu}{-\mu-\alpha,-\mu-\beta}\cdot
\uHG{p,\varpi}{\alpha,\beta}{\nu}{1-z}_{]0[} \\ \notag
&\quad+
\frac{\alpha\beta}{\nu(\nu-1)}
\GGamma{\Phi^p_\KZ}{\mu,\nu}{\alpha,\beta}\cdot
(1-z) \cdot\langle 1-z\rangle^{-\nu}\cdot\uHG{p,\varpi}{1-\alpha-\mu,1-\beta-\mu}{2-\nu}{1-z}_{]0[},
\end{align}
where $\uHG{p,\varpi}{\aaa,\bbb}{\vvv}{1-\zzz}_{]0[}$ and
$\uHG{p,\varpi}{1-\aaa-\uuu,1-\bbb-\uuu}{2-\vvv}{1-\zzz}_{]0[}$
are regarded to be elements in 
$\Q[\aaa,\bbb,\uuu]_{(\vvv-1)}[[1-\zzz]]\cap 
\frac{1}{\vvv}\Q
[[\aaa,\bbb,\uuu,1-\zzz]]$.
This definition is justified by the following theorem.

\begin{thm}\label{thm:prolongation to 1}
Let $z$ be in a field extension $K$ of $\Q_p$ in $\C_p$ with $e_K<p-1$
where $e_K$ means  the absolute ramification index  of $K$.
Assume that $\alpha,\beta,\gamma\in\Z_{p}$ with 
$|\alpha|_p, |\beta|_p, |\gamma-1|_p<1$ and
$\alpha+\beta-\gamma\in\Z_{(p)}\setminus\N_{\pm}$
and that the branch $\varpi$ of $p$-adic logarithm is chosen such that
$\varpi\in\mathcal O_{K}$. 
Then \eqref{eq: formal pHG z and 1-z}
is well-defined and 
convergent for all  $z\in K$ with $0<|z-1|_p<1$.
\end{thm}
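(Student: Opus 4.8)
The plan is to mirror the proof of Theorem~\ref{thm: prolongation to infty}, now using the already-established formal connection formula \eqref{eq: formal pHG z and 1-z} in place of the one relating the disk $]\infty[$ to $]0[$. That identity expresses $\uHG{p,]1[}{\aaa,\bbb}{\ccc}{\zzz}$ formally as a $\Gamma_{\Phi^p_\KZ}$-combination of $\uHG{p,]0[}{\aaa,\bbb}{\aaa+\bbb+1-\ccc}{1-\zzz}$ and $(1-\zzz)^{-\vvv}(1-\zzz)\uHG{p,]0[}{\ccc-\aaa,\ccc-\bbb}{\ccc-\aaa-\bbb+1}{1-\zzz}$. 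Hence, under the specialization $(\aaa,\bbb,\ccc-1,1-\zzz,\LL_1)\mapsto(\alpha,\beta,\gamma-1,1-z,\Log(1-z))$, it suffices to show that each of the three types of factor --- the two $\Gamma_{\Phi^p_\KZ}$-quotients, the two hypergeometric series in $1-\zzz$, and the power $(1-\zzz)^{-\vvv}$ --- converges in $K$; I would establish these one at a time and then multiply, exactly as in the $]\infty[$ case. The specialization itself makes sense because the only denominator appearing in the decomposition is a power of $\vvv=\alpha+\beta+1-\gamma$, and this is nonzero since $\alpha+\beta-\gamma\in\Z_{(p)}\setminus\N_{\pm}$ forbids the value $-1$.

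First I would handle the Gamma quotients. Under the specialization the arguments become $-\uuu=\gamma-1$, $\uuu=1-\gamma$, $-\vvv=\gamma-1-\alpha-\beta$, $-\uuu-\aaa=\gamma-1-\alpha$, $-\uuu-\bbb=\gamma-1-\beta$, $-\aaa=-\alpha$ and $-\bbb=-\beta$, all of which lie in $p\Z_p$ by the hypotheses $|\alpha|_p,|\beta|_p,|\gamma-1|_p<1$. Recalling $\Gamma_{\Phi^p_\KZ}(s)=\exp\{\sum_{n\geqslant 1}\frac{(-1)^n\zeta_p(n)}{n}s^n\}$ and the bound $\zeta_p(n)\in{\mathfrak p}^{[n]}$ from \cite{Ch} already used in Theorem~\ref{thm: prolongation to infty}, the valuation of the $n$-th summand of the exponent is at least $n+\ord_p(\zeta_p(n))-\ord_p(n)$, which tends to $+\infty$ and stays above $\tfrac{1}{p-1}$; thus the exponent lies in the disk of convergence of $\exp$, and both $\Gamma_{\Phi^p_\KZ}$-quotients specialize to elements of $\C_p$.

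Next, for the two hypergeometric series I would invoke Theorem~\ref{thm: convergence of pHG on 0}(2). The numerator parameters $(\alpha,\beta)$ and $(\gamma-\alpha,\gamma-\beta)$ lie in $\Z_p$ because $\alpha,\beta\in p\Z_p$ and $\gamma\in\Z_p$, while the denominator parameters $\alpha+\beta+1-\gamma=\vvv$ and $\gamma-\alpha-\beta+1=2-\vvv$ lie in $\Z_{(p)}$ since $\alpha+\beta-\gamma\in\Z_{(p)}$; moreover $\alpha+\beta-\gamma\in\Z_{(p)}\setminus\N_{\pm}$ guarantees that neither denominator parameter lies in $\Z_{\leqslant 0}$. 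Theorem~\ref{thm: convergence of pHG on 0}(2) then gives convergence of both series at the argument $1-z$ as soon as $|1-z|_p<1$, which is part of our hypothesis.

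Finally, the power $(1-\zzz)^{-\vvv}=\exp\{-\vvv\,\LL_1\}$ specializes to $\langle 1-z\rangle^{-\vvv}=\exp\{-\vvv\Log(1-z)\}$, where $0<|z-1|_p$ makes $1-z$ a nonzero element of $K$; here $-\vvv\in p\Z_p$ and the branch satisfies $\Log(p)\in\mathcal O_K$, so $|{-\vvv}\,\Log(p)|_p\leqslant p^{-1}<p^{-1/(p-1)}$, and Lemma~\ref{lem: power map convergence} yields convergence of $\langle 1-z\rangle^{-\vvv}$. The part that needs the most care is this last one, and it is where the ramification bound is used: $e_K<p-1$ is precisely what forces the principal-unit part $1+m$ of $1-z$ to satisfy $|m|_p<p^{-1/(p-1)}$ \emph{strictly} (via \cite[Lemma 5.5]{W}), which is what places the exponent $-\vvv\Log(1-z)$ strictly inside the disk of convergence of $\exp$. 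Granting the three convergence statements, multiplying the convergent factors shows that the right-hand side of \eqref{eq: formal pHG z and 1-z} converges in $K$ for $0<|z-1|_p<1$, and therefore so does $\uHG{p,]1[}{\alpha,\beta}{\gamma}{z}$, as claimed.
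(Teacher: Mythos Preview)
Your proof is correct and follows essentially the same three-step structure as the paper's: specialize the formal connection formula \eqref{eq: formal pHG z and 1-z}, then check convergence of the $\Gamma_{\Phi^p_\KZ}$-quotients (the paper uses the simpler bound $\zeta_p(k)\in\Z_p$ rather than the PD-ideal estimate you cite), of the two hypergeometric series in $1-z$ via Theorem~\ref{thm: convergence of pHG on 0}(2), and of $\langle 1-z\rangle^{-\nu}$ via Lemma~\ref{lem: power map convergence}. Your closing remark that the hypothesis $e_K<p-1$ is ``precisely'' what gives the strict inequality $|m|_p<p^{-1/(p-1)}$ is an addition not in the paper --- there the proof simply invokes Lemma~\ref{lem: power map convergence} (whose weaker hypothesis $e_K<p$ is already implied), and in that lemma the strict bound needed for $\exp$ comes from $|\nu|_p<1$, not from a strict bound on $|m|_p$.
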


\begin{proof}
By our assumption on $\varpi$ 
and $|\nu|_p<1$
($\nu=\alpha+\beta+1-\gamma$),
we see that
$\langle 1-z\rangle^{-\nu}:=\exp\{-\nu\log^{p,\varpi} (1-z)\}$ 
converges  by Lemma \ref{lem: power map convergence}.

By the conditions  $\alpha,\beta,\gamma\in\Z_{p}$  and $\alpha+\beta-\gamma\in\Z_{(p)}\setminus\N_\pm$,
we learn from 
Theorem \ref{thm: convergence of pHG on 0}.(2),
that both $p$-adic series 
$\uHG{p,\varpi}{\alpha,\beta}{\alpha+\beta+1-\gamma}{1-z}_{]0[}$ 
and
$(1-z)\cdot\langle 1-z\rangle^{-\nu}\uHG{p,\varpi}{\gamma-\alpha,\gamma-\beta}{\gamma-\alpha-\beta+1}{1-z}_{]0[}$
converge when $|z-1|_p<1$.

Again by our condition $|\alpha|_p, |\beta|_p, |\gamma-1|_p<1$
and the fact $\zeta_p(k)\in\Z_p$ for all $k$ which follows from \cite{Ch},
both $\GGamma{\Phi^p_\KZ}{-\mu,-\nu}{-\mu-\alpha,-\mu-\beta}$
and
$\GGamma{\Phi^p_\KZ}{\mu,-\nu}{-\alpha,-\beta}$
converge on $\Q_p$.
Therefore we obtain the claim. 
\end{proof}

We note that  in the complex case it is calculated to be
$$\Gamma_{\varPhi^\C_\KZ}(z)=\exp\{\sum_{n=2}^\infty \frac{(-1)^n\zeta(n)}{n}z^n\}
=e^{\gamma z}\Gamma(1+z),$$ 
Formula \eqref{eq: formal pHG z and 1-z}
is regarded as a $p$-adic analogue of the classical connection formula (cf. \cite[15.3.7]{AS})
\begin{align*}
\uHG{\C}{a,b}{c}{z}=
&\frac{\Gamma(c)\Gamma(c-a-b)}{\Gamma(c-a)\Gamma(c-b)}
\uHG{\C}{a,b}{a+b-c+1}{1-z}\\ \notag
&+ (1-z)^{c-a-b}
\frac{\Gamma(c)\Gamma(a+b-c)}{\Gamma(a)\Gamma(b)}
\uHG{\C}{c-a,c-b}{c-a-b+1}{1-z}.
\end{align*}
 
The following is a $p$-adic analogue of Gauss hypergeometric theorem
\eqref{eq: hypergeometric equation}.

\begin{thm}\label{thm:p-adic HG theorem}
Under the assumption of Theorem \ref{thm:prolongation to 1},
we have
$$
\lim_{\substack{ z\to 1 \\ z\in K }}\uHG{p,\varpi}{\alpha,\beta}{\gamma}{z}_{]1[}
=\prod_{k=1}^\infty\frac{\Gamma_p(1+p^k\mu+p^k\alpha)\cdot\Gamma_p(1+p^k\mu+p^k\beta)}
{\Gamma_p(1+p^k{\mu})\cdot\Gamma_p(1+p^k{\nu})}
$$
with $\mu=1-\gamma$ and $\nu=\alpha+\beta+1-\gamma$,
where $\Gamma_p$ is Morita's  $p$-adic gamma function  (\cite{M}) defined by 
\begin{equation}\label{eq: Morita gamma}
\Gamma_p(1-z)=\exp\left\{\gamma_pz+\sum_{n=2}^\infty L_p(n,\omega^{1-n})\frac{z^n}{n}\right\}
\end{equation}
($\gamma_p$: a $p$-adic analogue of Euler constant, cf. \cite{S})
converging on $|z|_p<1$.
\end{thm}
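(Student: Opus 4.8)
The plan is to pass to the limit $z\to 1$ in the connection formula \eqref{eq: pHG z and 1-z} and then to translate the surviving gamma factor of the $p$-adic KZ associator into Morita's $\Gamma_p$. Throughout I write $w=1-z$, so that $z\to 1$ in $K$ amounts to $w\to 0$.

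First I would analyze the two summands on the right of \eqref{eq: pHG z and 1-z}. The two hypergeometric series appearing there, namely $\uHG{p,]0[}{\alpha,\beta}{\alpha+\beta+1-\gamma}{w}$ and $\uHG{p,]0[}{\gamma-\alpha,\gamma-\beta}{\gamma-\alpha-\beta+1}{w}$, both converge on $|w|_p<1$ by Theorem \ref{thm: convergence of pHG on 0}(2) (using $\alpha,\beta\in\Z_p$ and $\alpha+\beta-\gamma\in\Z_{(p)}\setminus\N_{\pm}$); hence they are rigid analytic near $w=0$ and tend to their constant term $1$ as $w\to 0$. For the second summand, the convergence of $\langle 1-z\rangle^{-\nu}=\exp\{-\nu\Log(1-z)\}$ is exactly the one established in the proof of Theorem \ref{thm:prolongation to 1} via Lemma \ref{lem: power map convergence}; moreover that argument gives $|\langle 1-z\rangle^{-\nu}|_p=1$ uniformly for $0<|w|_p<1$, since it is the exponential of a quantity of absolute value $<p^{-1/(p-1)}$. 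Consequently the whole second summand has $p$-adic absolute value bounded by $|w|_p$ times a bounded quantity, so it tends to $0$. Recalling (as in Theorem \ref{thm:prolongation to 1}) that the gamma factors converge on $\Q_p$ under $|\alpha|_p,|\beta|_p,|\gamma-1|_p<1$ and $\zeta_p(k)\in\Z_p$, I obtain
\[
\lim_{\substack{z\to 1\\ z\in K}}\uHG{p}{\alpha,\beta}{\gamma}{z}
=\GGamma{\Phi^p_\KZ}{-\mu,-\nu}{-\mu-\alpha,-\mu-\beta}
=\frac{\Gamma_{\Phi^p_\KZ}(-\mu)\,\Gamma_{\Phi^p_\KZ}(-\nu)}{\Gamma_{\Phi^p_\KZ}(-\mu-\alpha)\,\Gamma_{\Phi^p_\KZ}(-\mu-\beta)}.
\]

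It then remains to identify this value with the asserted infinite product, which is the core of the argument. Using $\Gamma_{\Phi^p_\KZ}(s)=\exp\{\sum_{n\ge 1}\frac{(-1)^n\zeta_p(n)}{n}s^n\}$ I would take logarithms, getting $\log\Gamma_{\Phi^p_\KZ}(-s)=\sum_{n\ge 1}\frac{\zeta_p(n)}{n}s^n$, so that the logarithm of the limit equals $\sum_{n\ge 1}\frac{\zeta_p(n)}{n}P_n$ with $P_n=\mu^n+\nu^n-(\mu+\alpha)^n-(\mu+\beta)^n$. On the other side, expanding $\log\Gamma_p(1+t)=-\gamma_p t+\sum_{n\ge 2}(-1)^n L_p(n,\omega^{1-n})\frac{t^n}{n!}$ from \eqref{eq: Morita gamma}, the logarithm of the product is the sum over $k\ge 1$ of the corresponding alternating combination at $t=p^k(\mu+\alpha),\,p^k(\mu+\beta),\,p^k\mu,\,p^k\nu$. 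Here the relation $\nu=\mu+\alpha+\beta$ makes the linear ($n=1$, Euler-constant $\gamma_p$) contribution vanish identically, matching the vanishing of the $n=1$ term $\frac{\zeta_p(1)}{1}P_1$ on the associator side, so that the ill-behaved $\zeta_p(1)$ and $\gamma_p$ never intervene. For $n\ge 2$ the geometric resummation $\sum_{k\ge 1}p^{kn}=\frac{p^n}{1-p^n}$ collapses the $k$-sum, and a term-by-term comparison of the two expansions reduces the whole claim to the single arithmetic identity
\[
\frac{\zeta_p(n)}{n}=(-1)^{n+1}\,\frac{1}{n!}\,\frac{p^n}{1-p^n}\,L_p(n,\omega^{1-n})\qquad(n\ge 2),
\]
that is, the known comparison between the depth-one $p$-adic zeta values $\zeta_p(n)$ and the Kubota--Leopoldt $p$-adic $L$-values, up to the explicit elementary factor displayed.

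The main obstacle I anticipate is precisely this last identity together with the bookkeeping of its normalization: one must pin down the exact constant relating Furusho's $\zeta_p(n)$ to $L_p(n,\omega^{1-n})$ and check that it is compatible with the factor $\frac{1}{n!}\frac{p^n}{1-p^n}$ produced by Morita's expansion and the resummation. Convergence of the resulting infinite product is then automatic, since $p^k(\mu+\alpha),\dots\to 0$ forces each factor to tend to $\Gamma_p(1)=1$ with logarithm of size $O(p^{kn})$; the interchange of the summations over $k$ and $n$ is justified by these $p$-adic size bounds. By contrast, the analytic input in the second paragraph (vanishing of one summand, continuity of the other at $w=0$) is routine and follows directly from the results already established.
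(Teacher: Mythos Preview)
Your proposal is correct and follows essentially the same route as the paper: pass to the limit in the connection formula \eqref{eq: pHG z and 1-z}, show the second summand vanishes (the paper does this by the explicit decomposition $1-z=\epsilon p^r(1+m)$, you by the equivalent observation $|\langle 1-z\rangle^{-\nu}|_p=1$), and then identify $\GGamma{\Phi^p_\KZ}{-\mu,-\nu}{-\mu-\alpha,-\mu-\beta}$ with the Morita $\Gamma_p$-product. The only difference is in this last identification: the paper simply invokes the closed product formula
\[
\Gamma_{\Phi^p_\KZ}(z)=\exp\Bigl\{\tfrac{p\gamma_p}{p-1}z\Bigr\}\prod_{k\ge 1}\Gamma_p(1+p^kz)^{-1}
\]
together with $\Gamma_{\Phi^p_\KZ}(z)\Gamma_{\Phi^p_\KZ}(-z)=1$, whereas your logarithmic expansion and the $\zeta_p(n)$ versus $L_p(n,\omega^{1-n})$ comparison are exactly what one does to \emph{prove} that product formula; so what you flag as the ``main obstacle'' is packaged by the paper as a single known identity rather than rederived.
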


\begin{proof}
When  writing $1-z=\epsilon\cdot p^r (1+m)$
with 
$\epsilon\in\mu_\infty$,
$r\in\frac{1}{e_K}\N$ and $m\in\mathfrak m_{K}$,
we have
\begin{align*}
\lim_{\substack{ z\to 1 \\ z\in K }}(1-z)\cdot\langle 1-z\rangle^{-\nu}
&=\lim_{r\to\infty}(1-z)\cdot
\exp\{-\nu r \varpi-\nu\log^p(1+m)) \} \\
&=\lim_{r\to\infty} \epsilon p^r(1+m)\cdot
\exp\{-\nu r \varpi\}\exp\{-\nu\log^p(1+m)) \} 
=0.
\end{align*}
Here we use the results 
$|\log^p(1+x)|_p=|x|_p$ and
$|\exp (x)-1|_p<1$ when $|x|_p<p^{\frac{-1}{p-1}}$ 
shown in \cite[Lemma 5.5 and the proof of Proposition 5.7]{W}.
So we have
$$
 \lim_{\substack{ z\to 1 \\ z\in K }}(1-z)\cdot\langle 1-z\rangle^{-\nu}\cdot
 \uHG{p}{\gamma-\alpha,\gamma-\beta}{\gamma-\alpha-\beta+1}{1-z}=0.
$$
By \eqref{eq: formal pHG z and 1-z},
we have
$$
\lim_{\substack{ z\to 1 \\ z\in K }}\uHG{p,\varpi}{\alpha,\beta}{\gamma}{z}_{]1[}
=\GGamma{\Phi^p_\KZ}{-\mu,-\nu}{-\mu-\alpha,-\mu-\beta}.
$$
By
$$
\Gamma_{\Phi^p_\KZ}(z)=
\exp\{\sum_{n=2}^\infty\frac{(-1)^n}{n}\zeta_p(n)z^n\}
=\exp\{\frac{p\gamma_p}{p-1}z\}\prod_{k=1}^\infty\Gamma_p(1+p^kz)^{-1},
$$
and $\Gamma_{\Phi^p_\KZ}(z)\Gamma_{\Phi^p_\KZ}(-z)=1$
which can be deduced from the fact $\zeta_p(2k)=$ for all $k>0$, 
we obtain the claim.
\end{proof}

\smallskip
Thus, Theorems \ref{thm:main_result} is established 
in the case when $0<|z-1|_p<1$.
\qed
\smallskip

\begin{rem}
Another construction of a $p$-adic hypergeometric function was developed by Dwork (\cite{Dw}).
However, at present, the precise relationship between our 
$p$-adic hypergeometric function
$\uHG{p}{\alpha,\beta}{\gamma}{z}$ ($\alpha, \beta, \gamma\in\Z_{(p)}$ 
with $\gamma\neq 0,-1,-2,\dots$)
and Dwork's framework remains unclear,
and further investigation is needed to clarify how these two constructions are related.
\end{rem}

\end{document}